\documentclass{amsart}

\usepackage{amsmath,amsthm,amscd,amssymb}
\usepackage{mathrsfs}
\usepackage{graphicx}
\usepackage{enumerate}

\usepackage{ascmac}
\usepackage{color}
\usepackage{here}

\theoremstyle{plain}

\newtheorem{thm}{Theorem}[section]
\newtheorem{cor}[thm]{Corollary}
\newtheorem{lem}[thm]{Lemma}
\newtheorem{prop}[thm]{Proposition}

\theoremstyle{definition}
\newtheorem{dfn}[thm]{Definition}

\theoremstyle{remark}
\newtheorem{remark}[thm]{Remark}

\numberwithin{equation}{section}
\newtheorem*{rmk}{Remark}

\newcounter{tmp}

\title[
Emergence via  
non-existence of averages]
{
Emergence via  
 non-existence of averages}

\date{\today}

\author{Shin Kiriki}
\address[Shin Kiriki]{Department of Mathematics, Tokai University, 4-1-1 Kitakaname, Hiratuka, Kanagawa, 259-1292, JAPAN}
\email{kiriki@tokai-u.jp}

\author{Yushi Nakano}
\address[Yushi Nakano]{Department of Mathematics, Tokai University, 4-1-1 Kitakaname, Hiratuka, Kanagawa, 259-1292, JAPAN}
\email{yushi.nakano@tsc.u-tokai.ac.jp}

\author{Teruhiko Soma}
\address[Teruhiko Soma]{Department of Mathematical Sciences, Tokyo Metropolitan University, 1-1 Minami-Ohsawa, Hachioji, Tokyo, 192-0397, JAPAN}
\email{tsoma@tmu.ac.jp}

\subjclass[2010]{Primary 	37C40; Secondary 37C29}

\keywords{Emergence; non-existence of averages; historic behavior; homoclinic tangency}

\begin{document}

\begin{abstract}

Inspired by a recent work by  Berger, 
we introduce the concept of pointwise emergence.  
This concept provides with a new quantitative perspective into the study of non-existence of averages
 for dynamical systems. 
We show that high pointwise emergence on a large set appears for abundant dynamical systems:  Any continuous maps on a compact metric space with the specification property have super-polynomial pointwise emergence on a residual subset of the state space. 
 Furthermore, there is a dense subset of any Newhouse open set each element of which has super-polynomial pointwise emergence on a positive Lebesgue measure subset of the state space. \end{abstract}

\maketitle

\section{Introduction}\label{s:intro}

The study of 
  \emph{infinitude} or \emph{non-existence  of averages} for dynamical systems 
  has  a long  history, despite  being  beyond the Smale-Palis program 
  \cite{PS1970,Palis2000,BDV04} 
  which has been a guiding principle in modern dynamical systems theory. 
It is only 70's that Newhouse showed in \cite{Newhouse74} that  there is a residual subset of any Newhouse open set (see Section \ref{s:keydfn} for precise definition) 
  each element 
   of which 
 has 
 infinitely many sinks. 
Furthermore, it is Bowen who first studied  dynamics without  time averages on   a positive Lebesgue measure set (although it was never published by himself, see \cite{Takens1994}). 
On the other hand, it is recent that Berger \cite{Berger2016} proved that typical dynamics (in the sense of Kolmogorov) in any Newhouse open set have infinitely many sinks, and that the first and third authors  \cite{KS2017}  showed  that there is a dense subset of any Newhouse set  each element of which has 
  a positive Lebesgue measure set  where 
 time averages do not exist. 
We refer to \cite{BDV04,Berger2016,KS2017} for detailed history. 

Recently, Berger \cite{Berger2017} introduced a  quantitative viewpoint into the study of  infinitude of averages, and further developed it in \cite{BB} with Bochi. 
In the paper \cite{Berger2017},   a 
``global'' 
$\epsilon$-approximation of empirical measures 
(i.e.~measures representing averages) 
of a dynamical system is called 
 \emph{emergence}  at scale $\epsilon >0$
 (we note that ``emergence'' is 
 one of the most important concepts in  complexity science \cite{MacKay2008}, 
  but had no rigorous formulation before  \cite{Berger2017} appeared),  
 and it is shown 
  that the growth rate of emergence in the limit $\epsilon \to 0$ captures the complexity of a dynamical system with \emph{infinitude of averages}. 
Our purpose in this paper is to investigate  ``local'' emergence (called \emph{pointwise emergence}, Definition \ref{dfn:main}). 
We will   see that pointwise emergence well adapts to the study of complexity of \emph{non-existence of averages}, 
resulting in 
 a strong contrast between   pointwise emergence and Berger's emergence 
 (see Section  \ref{ss:mte}).  
Furthermore, we prove that high pointwise emergence on a large set appears for abundant dynamical systems (Proposition \ref{thm:expanding} and Theorem \ref{thm:main}).

\subsection{Emergences}

We first  briefly recall the definition of 
Berger's emergence. 
Let $X$ be a compact metric space and $f: X\to X$ a continuous map. 
We study  
 empirical measures $\{ \delta _x^n\} _{n\geq 1}$  given by
\[
\delta _x^n \equiv \delta_x^n(f) = \frac{1}{n} \sum _{j=0}^{n-1} \delta _{f^j(x)} \quad (\text{$x\in X$, $n\geq 1$}),
\]
 where $\delta _y$ is the Dirac measure at $y\in X$. 
Note that $\int \varphi  d\delta _x^n =1/n \sum _{j=0}^{n-1} \varphi (f^j(x))$  is the (partial) time average of a continuous function $\varphi $  (along the orbit of $x$ by $f$) at $n\geq 1$, so that the study of  asymptotic behavior of $\{ \delta _x^n\} _{n\geq 1}$ in weak
 topology  would be most fundamental  
 in ergodic theory. 
We metrize the weak topology of 
the space  $\mathcal P(X)$   of probability measures on $X$ 
 by the first Wasserstein metric $ d \equiv W_1$ (see Section \ref{s:keydfn} for  the definition  of $ W_1$): 
recall that 
convergence 
 with respect to $W_1$ is equivalent to  the weak convergence 
  (refer to   e.g.~\cite[Theorems 6.9]{Villani2008}; other classical metrics to metrize the weak topology of $\mathcal P(X)$, such as L\'evy-Prokhorov metirc,  were also considered in \cite{BB}). 
By virtue of (a straightforward modification of) Proposition 1.10 of  \cite{Berger2017},   $x\mapsto  d \left( \delta _x^n , \nu \right)$ is  continuous  for any $n\geq 1$ and $\nu \in \mathcal P(X)$.

Let  $M$ be a compact manifold and $f$ a continuous map on $M$.
 In \cite{Berger2017},  Berger defined 
 the \emph{emergence} $\mathscr E_{\mathrm{Leb}}(\epsilon ) \equiv \mathscr E_{\mathrm{Leb}}(\epsilon ,f)$ of $f$  at scale $\epsilon>0$ by
\begin{multline}\label{dfn:metricemergence}
\mathscr E_{\mathrm{Leb}}(\epsilon ) = \min \Big\{ N \in \mathbb N \mid  \text{there exists   $\{ \mu _j\} _{j=1}^N \subset \mathcal P(M)$ such that} \\
 \limsup _{n\to \infty}\int _M  \min _{1\leq j\leq N}  d \left( \delta _x^n , \mu _j \right)d\mathrm{Leb} (x) \leq \epsilon \Big \} ,
\end{multline}
where $\mathrm{Leb}$ is the normalized Lebesgue measure on $M$. 
This was called \emph{metric emergence} in \cite{BB}, because they needed to distinguish it with  another emergence (called \emph{topological emergence}, see \eqref{eq:0401} for definition). 
We also use their terminology, that is, $\mathscr E_{\mathrm{Leb}}(\epsilon )$ will be called metric emergence at scale $\epsilon >0$. 
(To be more precise, in \cite{BB} they also studied metric emergences $\mathscr E_\mu (\epsilon )$ for any probability measure $\mu$ on a compact metric space $X$ (not  necessarily a manifold) defined by \eqref{dfn:metricemergence} with $\mu$ instead of $\mathrm{Leb}$,  and obtained a variational principle for metric and topological emergences.)

The inequality in \eqref{dfn:metricemergence} means that $\{ \mu _j\}_{j=1}^N$ approximates the  statistics of $f$ in the $\epsilon$ scale. Hence, once one fixes 
 $\epsilon$, the complexity of  statistics of $f$ ``emerges'' as $\mathscr E_{\mathrm{Leb}}(\epsilon)$. 
 Interesting examples are as follows: 
It is shown in \cite[Section 1.2]{Berger2017} that if $f$ has finitely many ergodic probability  measures such that the union of basins of the measures covers $M$ up to a zero Lebesgue measure set, then $\mathscr E_{\mathrm{Leb}}(\epsilon )$ is bounded by the number of the measures for any $\epsilon$. 
 On the other hand,  $\lim _{\epsilon \to 0}\mathscr E_{\mathrm{Leb}}(\epsilon ) = \infty$  if $f$ has infinitely many sinks (\cite[Claim 1.13]{Berger2017}) 
 or if $f$ is a conservative system on the annulus $\mathbb S^1 \times [0,1]$ which preserves each circle $\mathbb S^1 \times \{\rho \}$ with $\rho \in [0,1]$ and satisfies   a very mild condition (\cite[Proposition 4.1]{BB}). 
In this sense, we may say that metric emergence well captures infinitude of averages.

Under the background of naive and massive uses of computer approximation of statistics in many branches of  sciences, Berger started  a program to prove that for each typical dynamics (in the sense of Kolmogorov) in an open set of the space of diffeomorphisms, 
the metric emergence  is \emph{super-polynomial}, that is, 
$\limsup _{\epsilon \to 0} \log \mathscr E_{\mathrm{Leb}}(\epsilon )/(-\log \epsilon ) =\infty$ (or equivalently,   $\limsup _{\epsilon \to 0} \epsilon ^{\alpha}\mathscr E_{\mathrm{Leb}}(\epsilon )  =\infty$ for any $\alpha \geq 0$), 
see  \cite[Problem 1.14]{Berger2017}. 
Among computer scientists, an algorithm of super-polynomial complexity 
 is thought to be \emph{not feasible in practice by a computer} \cite{Cobham1965}, so that the accomplishment of the  program may give
an alarm to  the aforementioned optimistic trend. 
A great contribution to the program was recently made 
 in \cite{BB}. 
We also 
 remember that 
another   \emph{quantitative} study of generic non-hyperbolic dynamics  
  by Kaloshin \cite{Kaloshin2000}
(i.e.~super-exponential growth of number of periodic orbits for generic dynamics in Newhouse open sets, in which  infinitely many sinks  exist for generic dynamics)
 opened up a fruitful research field.

A feature of the metric emergence is the \emph{integration} in \eqref{dfn:metricemergence} resulting in a grasp of ``global'' statistical information of the dynamics. 
In this paper, we consider 
 ``local'' emergence as follows.  
 Let $X$ be a compact metric space (not necessarily a manifold).
 \begin{dfn}\label{dfn:main}
Given $\epsilon >0$ and $x\in X$,  the \emph{pointwise emergence} $\mathscr E_x(\epsilon ) \equiv \mathscr E_x(\epsilon ,f)$ of $f$  at scale $\epsilon$ at $x$ is defined by
\begin{multline}\label{eq:0403d}
\mathscr E_x(\epsilon ) = \min \Big\{ N \in \mathbb N \mid  \text{there exists   $\{ \mu _j\} _{j=1}^N \subset \mathcal P(X)$ such that} \\
\limsup _{n\to \infty}  \min _{1\leq j\leq N}  d \left( \delta _x^n , \mu _j \right) \leq \epsilon \Big \}.
\end{multline}
The pointwise emergence at $x\in X$ is called \emph{super-polynomial} 
 if 
\[
\limsup _{\epsilon \to 0}  \frac{\log \mathscr E_x(\epsilon ) }{-\log \epsilon } =\infty.
\] 
\end{dfn}

\subsection{Historic behavior}
\label{subsection:hb}

We can see that the pointwise emergence gives a quantitative perspective into  non-existence of averages, or historic behavior. 
Recall that a point $x\in X$ (or its forward orbit) is said to have \emph{historic behavior} if the time average 
$
\lim _{n\to \infty} \delta _x^n
$ 
does not exist. 
(This terminology originates from Ruelle \cite{Ruelle2001}; see also \cite{Takens2008}.) 
Although the set of points with historic behavior is a $\mu$-zero measure set for any invariant measure $\mu$ due to Birkhoff's ergodic theorem (so that the set   is  called the \emph{irregular set} or the \emph{non-typical set} in the context of thermodynamic formalism \cite{BS2000, Thompson2012}), 
the  set is known to be remarkably large for many dynamical systems. 

Known dynamical systems  with historic behavior on a measure-theoretically large set are as follows. 
It is a famous folklore that Bowen knew that a surface flow with heteroclinically connected two dissipative saddle points 
 has a positive Lebesgue measure set consisting of 
  points with historic behavior (see \cite{Gaunersdorfer1992,Takens1994} for precise proof). 
  We emphasize that  for Bowen's example, there are many ``abnormal'' results 
  other than 
    historic behavior, 
 refer to  e.g.~\cite{BBS1999,Araujo2000,Araujo2001,OY2008,AP2018}. 
However, Bowen's example is easily broken by small perturbations, and thus
Takens asked in \cite{Takens2008} whether there is a persistent class of diffeomorphisms
for which the set of points with historic behavior is of positive Lebesgue measure
(called \emph{Takens' Last Problem}). 
The first and third authors \cite{KS2017} affirmatively answered it by showing that 
there is a  dense subset of any Newhouse open set  in the set of $\mathcal C^r$ surface diffeomorphisms ($2\leq r<\infty$) such that any element of the dense set has 
 a wandering domain consisting of points with historic behavior, 
by employing the best technology developed by Colli-Vargas \cite{CV2001}  for wandering domains near homoclinic tangency.  
 Very recently, Berger and  Biebler 
extended it to the $\mathcal C^\infty$ and  analytic cases (\cite{BB2020}). 

One can also find other
interesting examples with a positive Lebesgue measure set consisting of 
  points with historic behavior 
for some quadratic maps 
 in \cite{HK1990}, 
   for  flows generated by   3-dimensional vector fields (in  a locally dense set) with heteroclinic cycles between periodic solutions 
    in \cite{LR2016}, 
and  for some partially hyperbolic dynamics which is a compactification  of 
an $\mathbb R$-extension of an Anosov diffeomorphism 
sharing properties with the Brownian motion on $\mathbb R$ in \cite{CYZ2018}.

From topological  viewpoint, we can  find more examples with historic behavior on a large set. 
  Sigmund essentially showed in \cite{Sigmund1974}  that any continuous map   on a compact metric space with  the specification property has a residual subset of the state space consisting of points with historic behavior (this is explicitly stated and proven by himself in \cite[Proposition 21.18]{DGS1976}). 
See Subsection \ref{subsection:01202} 
for the definition 
 of 
the specification property.
We here just remember that any topologically mixing subshift of finite type
 satisfies the specification property (\cite[Proposition 21.2]{DGS1976}).
This result was extended to 
 shifts with weak specification in   \cite{BLV2014},   geometric Lorenz flows in \cite{KLS2016}, sectionally hyperbolic flows in \cite{AP2018}, and  
  $\mathcal C^1$-generic diffeomorphisms with non-hyperbolic homoclinic classes in \cite{BKNSR}. 
 In  the context of thermodynamic formalism,  a large contribution  to historic behavior 
  was also made 
 by several authors. 
A very incomplete list of them is \cite{PP84, BS2000, CKS2005, Thompson2012, CZZ2011, CTV2015, BV2017, BLV2018}. 
We here merely mention that Pesin and Pitskel' \cite{PP84}  
 showed that full shifts 
  curries  full topological entropy and full Hausdorff dimension on the set of points with historic behavior.

A fundamental 
relation between historic behavior and  pointwise emergence
 is as follows.
 Let $\mathcal A_x\equiv \mathcal A_x(f)$ be  the set of accumulation points of $\{ \delta _x^n(f) \}_{n\geq 1}$ with respect to $ d $.
Notice that $x$ has historic behavior if and only if  $\# \mathcal A_x(f) >1$. 
For a subset $\mathcal Y$ of a compact metric space $\mathcal X$, 
let  $N (\epsilon , \mathcal Y)$ be  the $\epsilon$-covering number of $\mathcal Y$ by closed balls, 
 and  denote
the upper  and lower box-counting dimension of $\mathcal Y$  by $\overline{\mathrm{dim} }(\mathcal Y ) $  and $\underline{\mathrm{dim} }(\mathcal Y ) $ respectively,  that is, 
\[
\overline{\mathrm{dim} }(\mathcal Y  )= \limsup_{\epsilon \to 0} \frac{\log N(\epsilon , \mathcal Y  )}{-\log \epsilon} ,\quad \underline{\mathrm{dim} }(\mathcal Y  )= \liminf _{\epsilon \to 0} \frac{\log N(\epsilon , \mathcal Y  )}{-\log \epsilon} .
\]
When they coincide, we simply write it as $\mathrm{dim} (\mathcal Y  )$.
Then, 
it is straightforward to observe that
\begin{align}\label{eq:coveringpointwiseemergence}
\mathscr E_x(\epsilon ,f) & = N(\epsilon , \mathcal A_x(f)),
\end{align} 
and thus, 
by the well-known fact that $\mathcal A_x(f)$ is a  connected set in $\mathcal P(X)$ 
(cf.~\cite[Proposition 3.8]{DGS1976}),  
we conclude that $x$ has historic behavior  if and only if
 \begin{equation}\label{eq:prop1.2}
\liminf _{\epsilon \to 0} \frac{\log \mathscr E_x(\epsilon ,f )}{-\log \epsilon } =\underline{\mathrm{dim}}(\mathcal A_x(f)) \geq 1  
 \end{equation}
(in particular, $\mathscr E_x(\epsilon ,f )$ diverges as $\epsilon \to 0$).

From this dimensional perspective, 
we can easily get the following useful criterion for super-polynomial pointwise emergence (recall that each ergodic measure in $\mathcal P_f(X)$ is an extremal point; cf.~\cite[Proposition 5.6]{DGS1976}):
If 
 $\mathcal A_x(f)$ includes 
  the convex hull of  infinitely many distinct ergodic invariant probability measures, 
  then 
 \begin{equation}\label{eq:prop1.2b}
  \limsup _{\epsilon \to 0} \frac{\log \mathscr E_x(\epsilon ,f )}{-\log \epsilon }= \overline{\mathrm{dim}}(\mathcal A_x(f)) \geq\underline{\mathrm{dim}}(\mathcal A_x(f)) =\infty ,
 \end{equation}
that is, the pointwise emergence at $x$ diverges super-exponentially fast.
In the previously mentioned paper  by Sigmund \cite{Sigmund1974}, he in fact showed that if $f: X\to X$ satisfies the specification property, then  there exists a residual subset $R$ of $X$ such that  $\mathcal A_x(f)= \mathcal P_f(X)$ for any $x\in R$, 
where $\mathcal P_f( X)$ is the set of $f$-invariant probability measures on $X$. 
Furthermore, for any continuous map with the specification property, 
 the set of  periodic points is dense in $X$ (see \cite[Propositions 2]{Sigmund1974}), in particular, if $X$ is  an infinite set, then $\mathcal P_f(X)$ includes infinitely many distinct ergodic invariant probability measures (note that any dense subset of an infinite  metric space is an infinite set).
By combining  these results  with \eqref{eq:prop1.2b},
 we immediately get the following conclusion for super-polynomial pointwise emergence.   
\begin{prop}\label{thm:expanding}
Let $X$ be an infinite compact metric space  and $f: X\to X$ a continuous map with the specification property.
Then,   there is  a residual subset $R$ of $ X$ such that 
\begin{equation}\label{eq:0501a}
 \lim _{\epsilon \to 0}  \frac{\log \mathscr E_x(\epsilon ,f) }{-\log \epsilon } =\infty
  \quad \text{for all 
   $x\in R$}.
\end{equation}
\end{prop}

\begin{rmk}
 It seems that  super-polynomial pointwise emergence in \eqref{eq:0501a} holds for more general classes of  dynamical systems  without the specification property,  
 such as dynamics with historic behavior in  \cite{BLV2014,KLS2016,AP2018,BKNSR}. 
Moreover, it is of great interest to see whether one can develop \emph{thermodynamic
formalism
 on the set of points with super-polynomial pointwise emergence}, refer to e.g.~\cite{PP84,BS2000,Thompson2012}.  
\end{rmk}

\subsection{Metric and topological emergences}\label{ss:mte}
A  formula similar to \eqref{eq:coveringpointwiseemergence}  is seen in \cite[Proposition 3.14]{BB} for metric emergence: if $f$ is a conservative map, then
\begin{equation}\label{eq:0401}
\mathscr E_{\mathrm{Leb}}(\epsilon , f)  \leq N( \epsilon ,  \mathcal P_{\mathrm{erg}} (f) ),
\end{equation}
where  $\mathcal P_{\mathrm{erg}} (f)$  is the set of ergodic probability measures of $f$. 
In \cite{BB}, the quantity of the right-hand side of \eqref{eq:0401} is called the \emph{topological emergence} of $f$ at scale $\epsilon >0$, and its complexity and connection with metric emergence were deeply investigated. 
Conformal expanding repellers and hyperbolic sets of conservative surface diffeomorphisms  are  important examples for which the inequality in \eqref{eq:0401} is strict (\cite[Theorem A]{BB}).  
  Note that there is no inclusion relationship  between $\mathcal A_x$ and  $ \mathcal P_{\mathrm{erg}} (f)$ in general (see examples below), and so is between pointwise and topological emergences.   
A basic property 
 for 
  pointwise and metric emergences is the following (its proof will be given in Section \ref{s:pre}). 
\begin{lem}\label{lem:0402}
Let  $f: M\to M$ be a continuous map on a compact manifold $M$.
For any $\epsilon >0$ and Borel set $D\subset M$ of positive Lebesgue measure,  
\[
\min _{x\in D}\mathscr E_x(\epsilon ,f) \leq \mathscr E_{\mathrm{Leb}} (\mathrm{Leb} (D) \epsilon ,f ) .
\]
\end{lem}
We  summarize the differences between metric/topological and pointwise emergences: 
unlike the similarity in definition, the properties of these 
 emergences are rather in  strong contrast. 
Firstly, recall that if $f$ has infinitely many sinks, then its metric emergence $\mathscr E_{\mathrm{Leb}}(\epsilon )$ diverges in the limit $\epsilon \to 0$ (\cite[Claim 1.13]{Berger2017}). 
On the other hand, it is obvious that  for such dynamics $f$, the pointwise emergence is trivial (i.e.~$\mathscr E_x(\epsilon )=1$ for any $\epsilon >0$) on the basin of the sinks. 
Similarly, in \cite[Section 4]{BB}, many   conservative systems $f$ on the annulus 
 with super-polynomially diverging metric emergence were constructed, while, 
 since the constructed dynamics decomposes the annulus into $f$-invariant circles, 
one can easily see that pointwise emergence of the conservative system is minimal
  everywhere. 
   That is, the inequality in Lemma \ref{lem:0402} is strict for such dynamical systems.  
Conversely, by virtue of  Proposition \ref{thm:expanding}, 
 for expanding maps or Anosov diffeomorphisms on a compact manifold $M$, 
the pointwise emergence diverges super-polynomially fast
   on a residual subset of $M$, 
 while the metric emergence is bounded because there exist finitely many SRB   measures whose basins cover $M$  up to a zero Lebesgue measure set (cf.~\cite{BDV04}).

Furthermore,   Bowen's example has only finitely many ergodic probability measures under an appropriate 
setting (cf.~\cite{GGS2020}), so that its topological emergence is  bounded, 
while the pointwise emergence diverges 
with polynomial order of degree at least $1$
on a positive Lebesgue measure set
 because it has historic behavior and \eqref{eq:prop1.2} holds.
Conversely, for expanding maps or Anosov diffeomorphisms, 
the pointwise emergence is bounded almost everywhere, 
while the topological emergence may diverge super-polynomially fast due to \cite[Theorem A]{BB}.

\subsection{Main result}\label{s:mainresults}
By examples in Subsection \ref{subsection:hb} together with   \eqref{eq:prop1.2},  one can find  many dynamical systems whose  pointwise emergence diverges on a topologically or measure-theoretically large set. 
In fact, we saw in Proposition \ref{thm:expanding}
 that there are abundant dynamical systems with high pointwise emergence on a residual set, due to the established theory for $\mathcal A_x$. 
 However, 
in measure-theoretic context, 
 to the best of our knowledge,  
the box-counting dimension of $\mathcal A_x$
 (i.e.~the degree of polynomial  growth of pointwise emergence)  for any known dynamics with historic behavior is only $1$, 
 except  
  some   quadratic maps investigated by  Keller and Hofbauer 
     \cite{HK1990}. 
Furthermore, as mentioned in Subsection \ref{ss:mte}, any known dynamical system with super-polynomial metric emergence  is not  helpful to construct high pointwise emergence. 
However, we can show that there are abundant dynamical systems with super-polynomial pointwise emergences on a positive Lebesgue measure set, which is our main result.

Let $\mathrm{Diff} ^r(M)$ be the space of $\mathcal C^r$ diffeomorphisms on a closed surface $M$. 
In this paper, we mean by a basic set a compact hyperbolic and locally
maximal invariant set which is transitive and contains a dense subset of periodic
orbits. 
We tacitly assume throughout this paper that any basic set is not a single orbit.  
Recall that a non-empty connected open set $D$ is called a \emph{wandering domain} of $f$ if $f^i(D) \cap f^j(D) =\emptyset$ for all nonnegative integers $i, j$ with $i\neq j$. 
\begingroup
\setcounter{tmp}{\value{thm}}
\setcounter{thm}{0}
\renewcommand\thethm{\Alph{thm}}
\begin{thm}\label{thm:main}
There exists a dense subset $\mathcal D$ of any Newhouse open set (definition given in Section \ref{s:keydfn}) of $\mathrm{Diff} ^r(M)$  with 
 $2\leq r<\infty$   such that for each $f \in \mathcal D$, one can find a 
 wandering domain (in particular, a positive Lebesgue measure set)  
$D\subset M$ 
such that
the union of $\omega$-limit set of each point in $D$ includes a basic set $\Lambda$ 
 and 
\begin{equation}\label{eq:1110}
\mathcal A_x \supset \{ 
(1-\zeta ) \delta _{\hat p} + \zeta \mu \mid \mu \in \mathcal P_f(\Lambda )  \}
 \quad \text{for all $x\in D$} 
 \end{equation}
with some $\zeta\in (0,1]$ and   a saddle fixed point    $\hat p \in \Lambda$ (refer to Theorem \ref{thm:0411b}), where
 $\mathcal P_f(\Lambda ) $ is the set of $f$-invariant probability measures whose supports are included in $\Lambda$. 
Furthermore,
\begin{equation}\label{eq:0501b}
 \lim _{\epsilon \to 0}  \frac{\log \mathscr E_x(\epsilon ,f) }{-\log \epsilon } =\infty
  \quad \text{for all 
   $x\in D$} .
\end{equation}
\end{thm}
\endgroup

\setcounter{thm}{3}
Notice that 
\eqref{eq:0501b} is an immediate consequence of \eqref{eq:1110} due to \eqref{eq:prop1.2b}. 
By Lemma \ref{lem:0402}, 
we can contribute to the previously mentioned Berger program for metric emergence \cite[Problem 1.14]{Berger2017} as follows:
\begin{cor}\label{cor:1}
There exists a dense subset $\mathcal D$ of any Newhouse open set of $\mathrm{Diff} ^r(M)$ with a closed surface $M$ and $2\leq r<\infty$ such that for each $f \in \mathcal D$, 
\[
 \limsup _{\epsilon \to 0}  \frac{\log \mathscr E_{\mathrm{Leb}}(\epsilon ,f) }{-\log \epsilon } =\infty .
\]
\end{cor}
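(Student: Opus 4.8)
The plan is to combine Theorem~\ref{thm:main} with Proposition~\ref{lem:0402} in the most direct way. Fix a Newhouse open set and let $\mathcal D$ be the dense subset and, for each $f\in\mathcal D$, let $D=D(f)\subset M$ be the positive Lebesgue measure set supplied by Theorem~\ref{thm:main}. Since $\mathrm{Leb}(D)>0$, set $c\equiv \mathrm{Leb}(D)\in(0,1]$; this constant depends only on $f$, not on $\epsilon$. The idea is that super-polynomial growth of $\mathscr E_x(\cdot,f)$ at a single point $x\in D$ already forces super-polynomial growth of $\mathscr E_{\mathrm{Leb}}(\cdot,f)$, because Proposition~\ref{lem:0402} controls $\min_{x\in D}\mathscr E_x(\epsilon,f)$ from above by $\mathscr E_{\mathrm{Leb}}(c\epsilon,f)$, and a fixed multiplicative rescaling of $\epsilon$ does not affect the limit defining super-polynomiality.

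The key steps, in order, are as follows. First I would pick any $x_0\in D$ (the set is nonempty since it has positive measure). By Theorem~\ref{thm:main},
\[
\lim_{\epsilon\to 0}\frac{\log\mathscr E_{x_0}(\epsilon,f)}{-\log\epsilon}=\infty .
\]
Second, apply Proposition~\ref{lem:0402} with this $D$: for every $\epsilon>0$,
\[
\mathscr E_{x_0}(\epsilon,f)\ \geq\ \min_{x\in D}\mathscr E_x(\epsilon,f)\ \leq\ \mathscr E_{\mathrm{Leb}}(c\epsilon,f),
\]
so in particular $\min_{x\in D}\mathscr E_x(\epsilon,f)\leq\mathscr E_{\mathrm{Leb}}(c\epsilon,f)$. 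Third, I would reparametrise: write $\eta=c\epsilon$, so $\epsilon=\eta/c$ and, as $\epsilon\to 0$, also $\eta\to 0$. Then
\[
\frac{\log\mathscr E_{\mathrm{Leb}}(\eta,f)}{-\log\eta}\ \geq\ \frac{\log\bigl(\min_{x\in D}\mathscr E_x(\eta/c,f)\bigr)}{-\log\eta}
=\frac{\log\bigl(\min_{x\in D}\mathscr E_x(\eta/c,f)\bigr)}{-\log(\eta/c)-\log c}.
\]
Since $\min_{x\in D}\mathscr E_x(\eta/c,f)$ is bounded below by... — more carefully, one uses that by Theorem~\ref{thm:main} \emph{every} $x\in D$ has super-polynomial pointwise emergence, hence $\liminf_{\epsilon\to0}\log\bigl(\min_{x\in D}\mathscr E_x(\epsilon,f)\bigr)/(-\log\epsilon)$ need not be infinite a priori; this is the subtle point I address next. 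To keep the argument clean I would simply bound $\mathscr E_{\mathrm{Leb}}(c\epsilon,f)\geq\mathscr E_{x_0}(\epsilon,f)/1$...

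Let me restate the third step correctly. From $\mathscr E_{\mathrm{Leb}}(c\epsilon,f)\geq\min_{x\in D}\mathscr E_x(\epsilon,f)$ one does \emph{not} immediately get a lower bound by $\mathscr E_{x_0}(\epsilon,f)$, since the minimum over $D$ could be strictly smaller. The honest route is: for each $\epsilon$, the minimum is attained (or approached) at some point, but \emph{any} such point lies in $D$, where Theorem~\ref{thm:main} guarantees super-polynomial growth. A convenient way to exploit this is to observe that the function $\epsilon\mapsto\min_{x\in D}\mathscr E_x(\epsilon,f)$ is itself super-polynomial: indeed, suppose not, so $\limsup_{\epsilon\to0}\log\bigl(\min_{x\in D}\mathscr E_x(\epsilon,f)\bigr)/(-\log\epsilon)=L<\infty$; then there is a sequence $\epsilon_k\to0$ with $\mathscr E_x(\epsilon_k,f)\leq\epsilon_k^{-(L+1)}$ for \emph{every} $x\in D$ and all large $k$ — but one then needs a single $x$ violating super-polynomiality along this sequence, which does not follow because the minimising point may vary with $k$. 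The main obstacle, therefore, is precisely this: passing from "every point of $D$ is super-polynomial" to "the pointwise minimum over $D$ is super-polynomial." I expect this to be resolved either by (i) noting that Theorem~\ref{thm:main}'s construction in fact yields a \emph{uniform} lower bound $\mathscr E_x(\epsilon,f)\geq\phi(\epsilon)$ valid for all $x\in D$ along a common sequence $\epsilon_k\to0$ with $\log\phi(\epsilon_k)/(-\log\epsilon_k)\to\infty$ — which is what the wandering-domain construction of \cite{KS2017,CV2001} naturally provides, since the relevant accumulation sets $\mathcal A_x(f)$ are essentially the same for all $x$ in the wandering domain — or (ii) by appealing to the construction at a single point $x_0$ and a localisation: restricting $D$ to a smaller positive-measure set $D'$ on which $\mathscr E_x(\epsilon,f)\geq\mathscr E_{x_0}(\epsilon,f)$ up to a harmless factor, and applying Proposition~\ref{lem:0402} with $D'$. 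In either case, once one has a sequence $\epsilon_k\to0$ and a lower bound $\min_{x\in D}\mathscr E_x(\epsilon_k,f)\geq\psi_k$ with $\log\psi_k/(-\log\epsilon_k)\to\infty$, the final step is routine: by Proposition~\ref{lem:0402},
\[
\mathscr E_{\mathrm{Leb}}(c\epsilon_k,f)\ \geq\ \psi_k,\qquad\text{hence}\qquad
\frac{\log\mathscr E_{\mathrm{Leb}}(c\epsilon_k,f)}{-\log(c\epsilon_k)}\ \geq\ \frac{\log\psi_k}{-\log\epsilon_k-\log c}\ \longrightarrow\ \infty,
\]
and since $c\epsilon_k\to0$, this shows $\limsup_{\eta\to0}\log\mathscr E_{\mathrm{Leb}}(\eta,f)/(-\log\eta)=\infty$, which is the asserted super-polynomiality (written as a limit in the statement because the $\limsup$ being infinite is the standard meaning here). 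This completes the plan; the only genuine work is the uniformity observation in step two, everything else being the elementary rescaling above.
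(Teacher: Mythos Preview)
Your approach is the same as the paper's: the authors simply declare Corollary~\ref{cor:1} an ``immediate consequence of Theorem~\ref{thm:main} and Proposition~\ref{lem:0402}'' and give no further argument. Your identification of the uniformity issue is exactly right, and your proposed resolution~(i) is correct: in the paper's construction the wandering domain $D$ satisfies $\Delta_L(f)\subset\mathcal A_x(f)$ for \emph{every} $x\in D$ and every $L$ (Proposition~\ref{prop:9}), and the proof of Proposition~\ref{prop:14} then yields the explicit bound $\mathscr E_x(\epsilon,f)\geq C_L\,\epsilon^{-L}$ with $C_L$ depending only on $f$ (through $\zeta$ and $\rho_L$), not on $x$. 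Hence $\min_{x\in D}\mathscr E_x(\epsilon,f)\geq C_L\,\epsilon^{-L}$ for \emph{all} $\epsilon>0$, and Proposition~\ref{lem:0402} transfers this to $\mathscr E_{\mathrm{Leb}}$.

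One small correction: your final step only establishes $\limsup=\infty$, and your parenthetical remark that ``the $\limsup$ being infinite is the standard meaning here'' is not what the Corollary asserts. The statement writes $\lim$, and the argument does give the full limit once you use the uniform bound for \emph{all} $\epsilon$ rather than along a sequence: from $\mathscr E_{\mathrm{Leb}}(c\epsilon,f)\geq C_L\,\epsilon^{-L}$ for every $\epsilon>0$ one gets $\liminf_{\eta\to0}\log\mathscr E_{\mathrm{Leb}}(\eta,f)/(-\log\eta)\geq L$ for every $L$, hence the limit is $\infty$. So drop the sequence $\epsilon_k$ and work with the bound valid for all $\epsilon$; everything else in your plan is fine.
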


We emphasize that  Theorem \ref{thm:main} would be substantially  stronger than  Corollary \ref{cor:1},  because metric emergence quantifies infinitude of averages while  pointwise emergence quantifies non-existence of averages (historic behavior)
   as explained in Section \ref{ss:mte}. 
Therefore, 
we would rather say that 
Theorem \ref{thm:main} is a result about 
 a \emph{quantitative version of 
 Takens' Last Problem} 
 in the spirit of  
   Berger program.

\begin{rmk}
  Berger and Bochi proved in    \cite[Theorem D]{BB} that 
there exists a  residual  subset of any Newhouse open set 
of $\mathrm{Diff}^r(M)$ with $\mathrm{dim} (M)  =2$ and $1\leq r\leq \infty$ 
  such that any element of the  subset enjoys super-polynomial \emph{metric} emergence (although the result is proven by reducing it to  a conservative surface diffeomorphism whose  metric emergence is super-polynomial but    \emph{pointwise} emergence is everywhere minimal).
  That is, Corollary \ref{cor:1} is an alternative proof of a part of their result, and  so, it is natural to ask whether Theorem \ref{thm:main} holds with a residual set instead of the dense set $\mathcal D$. 
\end{rmk}

\begin{rmk}
After we completed the proof of super-polynomial pointwise emergence in Theorem \ref{thm:main}, 
we learned from Pierre Berger and S\'ebastien Biebler that they independently 
obtained a similar result, 
   although their proof is 
  quite  different  from ours 
(for example, they use a geometric model for parameter families of surface real mappings while we do not).
We also refer  to \cite{Berger2019, Talebi2020} for other recent results about emergences.
\end{rmk}

\subsection{Stretched exponential emergences}
\label{subsection:0120d}

In this subsection, we give a supplementary result related with    stretched exponential growths of pointwise emergences:
  we separated the results in this subsection from Section \ref{s:mainresults} because the proofs essentially use the results of Berger-Bochi \cite{BB} and  Berger-Biebler \cite{BB2020}, while the proof of Theorem \ref{thm:main} is self-contained.

Let $f: X\to X$ be a continuous map on a metric space $X$. 
  Let $\mathcal P_f( X)$ be the set of $f$-invariant probability measures on $X$ equipped with the first Wasserstein metric $d$.   
Then, it is not difficult 
 to see that 
$
 \mathcal A_x\subset \mathcal P_f(X)$ for all $x\in X$ 
 (cf.~\cite[Proposition 3.8]{DGS1976}).
 On the other hand, 
 it follows from \cite[Theorem 1.3]{BB}  that
 \begin{equation}\label{eq:0617a}
 \limsup _{\epsilon \to 0} \frac{\log \log N(\epsilon , \mathcal P_f (X) )}{-\log \epsilon } \leq  \overline{\mathrm{dim }}(X)
 \end{equation}
 (see Subsection 1.2 for the definition of $\overline{\mathrm{dim}}(X)$).
 So, it follows from \eqref{eq:coveringpointwiseemergence} 
  that 
\begin{equation}\label{eq:0615a}
\limsup _{\epsilon \to 0} \frac{ \log \log \mathscr E_x(\epsilon , f)}{-\log \epsilon } \leq  \overline{\mathrm{dim }}(X) \quad  \text{for any $x\in X$} .
 \end{equation} 
  Hence, it is natural to ask when the above inequality is an equality (i.e.~when stretched exponential pointwise emergences with maximal exponent are observed). 
 We refer to  \cite{BB} for a systematic study of stretched exponential growths for  topological and metric emergences.

We first give an answer for a special case of dynamics with the specification property, that is, subshifts  with the specification property, as follows.
\setcounter{thm}{4}
\begin{prop}\label{thm:0120b}
Let $X \subset   \{ 1, 2, \ldots , m\} ^{\mathbb N}$ be a subshift  $(m\geq 2)$, endowed with a standard metric $d_{X}(x ,y ) = \sum _{j=0}^\infty \frac{\vert x_j -y_j \vert }{\beta ^j}$ for $x=(x_0 ,x_1, \ldots ), y=(y_0, y_1, \ldots ) \in X$ with some $\beta >1$.
 Let $f:X \to X$ be the left shift operator.  
 Assume that $f$ satisfies the specification property.
 Then, there is a residual subset $R$ of $X$ such that
\[
\lim _{\epsilon \to 0} \frac{ \log \log \mathscr E_x(\epsilon , f)}{-\log \epsilon } = \mathrm{dim }(X) \quad  \text{for any $x\in R$} .
\]
\end{prop}
We recall   Furstenberg's formula $\mathrm{dim} (X) =  h_{\mathrm{top}}(f) / \log \beta $ for subshifts (\cite{Furstenberg1967}), where $h_{\mathrm{top}}(f) $ is the topological entropy of $f$. 
Using this formula, we will  show  that
\begin{equation}\label{eq:0604b}
\liminf _{\epsilon \to 0} \frac{ \log \log N(\epsilon , \mathcal P_f(X ))}{-\log \epsilon } \geq \dim (X)
\end{equation}
for $f$ in Proposition \ref{thm:0120b}, 
under the help of  Berger-Bochi's key estimate, see  Appendix \ref{s:is}.
Hence, Proposition \ref{thm:0120b} immediately follows from  \eqref{eq:coveringpointwiseemergence}, \eqref{eq:0615a} and  the aforementioned Sigmund's theorem for continuous maps with the specification property. 
  Note that \eqref{eq:0617a} and \eqref{eq:0604b} also imply the stretched exponential \emph{topological} emergence  with the maximal exponent $\dim (X)$ because $\mathcal P_{\mathrm{erg}}(f)$ is dense in $\mathcal P_f(X)$ for any continuous map on a compact metric space with the specification property (cf.~\cite{DGS1976}).
Furthermore,  one may get a two-sided subshift version of Proposition \ref{thm:0120b} with a small effort, and (the version of) Proposition \ref{thm:0120b}  may be   directly applicable to   basic sets of conservative surface diffeomorphisms or  conformal repellers, 
 for which maximal topological emergence is known  in \cite[Theorem A]{BB}.

Next we  consider dynamics in  Newhouse open sets. 
We have no idea of whether $\liminf _{\epsilon \to 0} ( \log \log N(\epsilon , \mathcal P_f(\Lambda )))/(-\log \epsilon ) \geq \dim (\Lambda )$ holds
(compare with \cite[Theorem 2.4]{BB} in which $f$ is a \emph{conservative} surface diffeomorphism having a basic set).
On the other hand, we  have the following  result by Berger and Biebler.
\begin{thm}$\mathrm{(}$\cite[Theorem 0.4]{BB2020}$\mathrm{)}$
\label{thm:0120f}
Let $f: M\to M$ be a $\mathcal C^{r}$ surface diffeomorphism $(1< r\leq \infty )$ on a closed surface $M$ having a basic set $\Lambda$ of saddle type.\footnote{Here we mean by saddle type that the tangent bundle  $TM$ over $\Lambda$ can be decomposed into one-dimensional stable and unstable bundles. 
 Theorem 0.4 of \cite{BB2020} only dealt with the case when $\Lambda$ is a horseshoe. 
However, the proof was given by projecting the saddle-type basic set of the surface  to a repeller of the real line and applying \cite[Theorem A]{BB}, which
 is proven for any conformal repeller.
Thus, the proof of \cite[Theorem 0.4]{BB2020} can be applied literally to obtain Theorem \ref{thm:0120f}.} 
Then, 
\[
\liminf _{\epsilon \to 0} \frac{ \log \log N(\epsilon , \mathcal P_f(\Lambda ))}{-\log \epsilon } \geq   \mathrm{dim } _u (\Lambda ) .
\]
where $\mathrm{dim } _u (\Lambda )>0$ is the box-counting dimension of $\Lambda \cap W^u_{\mathrm{loc}} (x)$ for any $x \in \Lambda$.\footnote{
Note that any saddle-type basic set of  a surface 
 is  $u$-conformal  in the sense of \cite[Section 22]{Pesin2008}, so
it follows from \cite[Theorem 22.1]{Pesin2008} that 
 $\dim (W^u_{\mathrm{loc}}(x)\cap \Lambda)$ is independent  of $x\in \Lambda$ and positive.}
\end{thm}
Under the establishment 
 of
  Theorem \ref{thm:0120f},    Theorem \ref{thm:main} together with \eqref{eq:coveringpointwiseemergence}  implies
stretched exponential pointwise emergences with a \emph{positive}  exponent:
\begin{cor}\label{cor:0611}
Let $f: M\to M$ be the surface diffeomorphism
and $D\subset M$ the  wandering domain given in Theorem \ref{thm:main}.
Then,
\[
\liminf _{\epsilon \to 0} \frac{ \log \log \mathscr E_x(\epsilon , f)}{-\log \epsilon } \geq  \mathrm{dim }_u(\Lambda )  \quad  \text{for any $x\in D$} .
\]
\end{cor}

\section{Key definitions and outline of proof}\label{s:keydfn}

In this section, we provide key definition used in the proof of main theorem, and briefly explain outline of the proof.

\subsection{Preliminary definitions}\label{subsection:01202}
We first  give 
precise definitions to 
undefined
 terminologies in Section \ref{s:intro}. 
Let $X$ be a compact metric space endowed with a metric $d_X$. 
A continuous map $f: X\to X$ on $X$ is said to satisfy the \emph{specification property} if for any $\epsilon > 0$, there exists a constant $\tau (\epsilon )\geq 0$ such that for any integer $K\geq 1$, any points $p_1, \ldots , p_K \in X$, any  integers $n_1, \ldots , n_K$ and 
$m_1, \ldots , m_K$ satisfying 
  $m_k -n_k \geq \tau (\epsilon )$ for any $1\leq  k \leq K$, there exists a periodic point $x \in X$ of period $N_K$   such that, with $N_k =\sum _{j=1}^{k} m_j$ and $N_0=0$, for every $1 \leq  k \leq K$,
\[
f^{n}(x) \in B_\epsilon ( f^n(p_k))   \quad \text{if $n\in I_k:=[N_{k-1}, N_{k-1} +n_k-1]$}
\]
where  $B_r (y)$ is  the ball with radius $r >0$ and  centered at $y\in X$.
Refer to \cite{DGS1976}.

 For $j=1,2$, let $p_j :X\times X\to X $ be the canonical projection to the $j$-th coordinate, and $(p_j) _*\pi$ the pushforward measure of a probability measure $\pi$ on $X\times X$ by $p_j$. 
 Let $\Pi (\mu, \nu )$ be the set of probability measures $\pi$ on $X\times X$ such that $(p_1)_*\pi =\mu$ and $(p_2)_*\pi =\nu$. (Such a measure $\pi$ is called a \emph{transport plan} or \emph{coupling} from $\mu$ to $\nu$.)  
 The first Wasserstein metric $ W_1 $ is defined as
 \[
  W_1  (\mu ,\nu )= \inf _{\pi \in \Pi (\mu , \nu )} \int _{X\times X} d_X(x,y) d\pi (x,y) \quad \text{for $ \mu  ,\nu \in \mathcal P(X)$} .
 \]
 (The integral in this formula is called the \emph{cost} of the transport plan $\pi$ with
respect to the \emph{cost function} $d_X$.) 
A standard reference for  Wasserstein metric is \cite{Villani2003,Villani2008}.
What we need in this paper is the following Kantorovich-Rubinstein dual representation  of the first Wasserstein metric: 
\begin{equation}\label{eq:0404d2}
 W_1  (\mu ,\nu ) = \sup _{\varphi \in \mathrm{Lip} ^1(X, \mathbb R )} \left\vert \int _X \varphi (x) d\mu (x) - \int _X \varphi (x) d\nu (x) \right\vert ,
\end{equation}
   where     $\mathrm{Lip} ^1(X, \mathbb R)$ is the space of all Lipschitz continuous real-valued functions $\varphi$ on $X$ whose  Lipschitz constant of $\varphi$ is bounded by $1$.  
Recall that we denoted $W_1$ by $d$ in Section \ref{s:intro}.

Next let us define a Newhouse open set. 
Let $M$ be a closed surface. 
 It was shown by Newhouse  that, for any $g \in \mathrm{Diff}^r(M)$ ($r\geq 2$) with a homoclinic tangency of
a dissipative saddle fixed point $\hat p$, 
there is an open set $\mathcal O \subset  \mathrm{Diff} ^r(M)$ whose closure contains $g$ and such that any element 
 of $\mathcal O$ is arbitrarily $\mathcal C^r$-approximated by a diffeomorphism $f$ with a homoclinic tangency associated with a dissipative saddle fixed point $\hat p_f$ which is the continuation of $\hat p$, and moreover $f$ has a $\mathcal C^r$-persistent tangency associated with some basic sets $\Lambda _f$ containing $p_f$ (i.e.~there is a $\mathcal C^r$ neighborhood of $f$  any element of which has a homoclinic tangency for the continuation of $\Lambda _f$). 
Such an open set $\mathcal O$ is called a \emph{Newhouse open set} (associated with $g$),   and call $(\hat p_f, \Lambda _f)$ the \emph{associated pair} of $f$.  
See  \cite{Newhouse79}.

\subsection{ Infinite dimensional simplex } 
As seen in Subsection \ref{subsection:hb},  in order to show super-polynomial pointwise emergence, it suffices to prove that $\mathcal A_x$ includes  an infinite dimensional simplex. 
We here prepare some notation to explore  the idea in detail.  
For each sequence  $\mathcal J = \{ \mu ^{(\ell )} \}_{\ell \geq 0}$ of 
probability measures     on $X$,   we define $\Delta  (\mathcal J )$ 
    by
\[ 
 \Delta (\mathcal J ) = \bigcup _{L\geq 1} \Delta _L(\mathcal J ), \quad \Delta _L(\mathcal J )=  \left \{  \mu _{\mathbf t}  (\mathcal J ) \mid \mathbf t\in A_L  \right\} , 
\]
where 
\begin{equation}\label{eq:1008c}
A_L=\left\{ (t_0, t_1, \ldots ,t_L) \in [0,1] ^{L+1}  \mid  \sum _{\ell =0}^L t_\ell =1 \right\} 
\end{equation}
endowed with the Euclidean norm induced from $[0,1] ^{L+1}$, 
and
\begin{equation}\label{eq:1008b}
\mu _{\mathbf t} \equiv \mu _{\mathbf t} (\mathcal J ) = \sum _{\ell =0} ^L t_\ell  \mu ^{(\ell )} \quad \text{for $\mathbf t =(t_0, t_1, \ldots ,t_L)$}  \in A_L.
\end{equation}
We also define $E (\mathcal J ,f)  $ by
\[
E (\mathcal J ,f) = \left \{ x\in X \mid \Delta (\mathcal J ) \subset \mathcal A_x(f)
\right\} .
\]

\subsection{Homogeneous coding on a wandering domain}

 In the next  subsection, we will construct  just \emph{one} nice code such that the associated point is in $E (\mathcal J, f)$ with a  sequence $\mathcal J = \{ \mu ^{(\ell )} \} _{\ell \geq 0}$ of infinitely many ergodic invariant probability measures. 
So we need to ``enlarge'' the point to a positive Lebesgue measure set.
  Let $f: M\to M$ be a $\mathcal C^r$ diffeomorphism ($r\geq 1$)  on a compact manifold 
   $M$.
 The following is the key definition in the proof of Theorem \ref{thm:main},
which is    reminiscent of the  specification property

\begin{dfn}\label{def_Gamma_wandering}
Let $\hat p$ be a fixed point and   $\{ p^{(\ell )}\}_{\ell \geq 0}$ a sequence of periodic points. 
  Let  
 $\{ m_k\} _{k\geq 1}$ be 
  a sequence of positive integers and $\{ \ell _k\} _{k\geq 1}$ a sequence of nonnegative integers. 
We say that a wandering domain 
 $D\subset M$ of $f$ is 
  \emph{coded by  $\{ \ell _k  \} _{k\geq 1}$ for $( \hat p, \{ p^{(\ell )} \} _{\ell \geq 0})$ over the base order $\{ m_k\} _{k\geq 1}$}
if 
there exist
sequences  $\{\widehat I_k\}_{k\geq 1}$  and $\{I_k\}_{k\geq 1}$ of disjoint discrete intervals 
and a sequence   $\{\epsilon_k\}_{k\geq 1}$ of positive numbers with $\lim_{k\rightarrow\infty}\epsilon_k=0$  
 satisfying the following conditions.
\begin{enumerate}[(1)]
\item[$\mathrm{(C1)}$] 
For any  $k\in \mathbb N$, 
 \[
\widehat I_k \cup I_k \subset [N_{k-1} , N_k -1]  ,
\]
where $N_k=\sum_{j=1}^{k}  m_j$ and $N_0=0$. 
Furthermore,  
\[
\lim_{k\to \infty}\frac{\# \widehat I_k+ \# I_k}{ m_k} =1 . 
\]
\item[$\mathrm{(C2)}$] \label{D_3}
For any sufficiently large $k\in \mathbb{N}$,  
\[
 f^n(D)\subset B_{\epsilon_k}\left(\hat p \right)\; \text{ if $n\in \widehat I_k$}. 
 \]
\item[$\mathrm{(C3)}$]  
 For any sufficiently large $k\in \mathbb{N}$, 
 \[
   f^n(D)\subset B_{\epsilon_k}\left( f^n (p^{(\ell _k)} )\right ) \; \text{ if $n\in I_k$},
 \] 
and 
 $\# I_k$ is a multiple of  $ \mathrm{per} (p^{(\ell _k)}) $.  Furthermore, 
 \[
\zeta =  \lim_{k\to \infty} \frac{\# I_k }{m_k} \quad \text{exists as a  strictly positive number}. 
\]
 \end{enumerate}
See Figure \ref{f_line2}.
\end{dfn}

\begin{figure}[hbt]
\centering
\scalebox{0.6}{\includegraphics[clip]{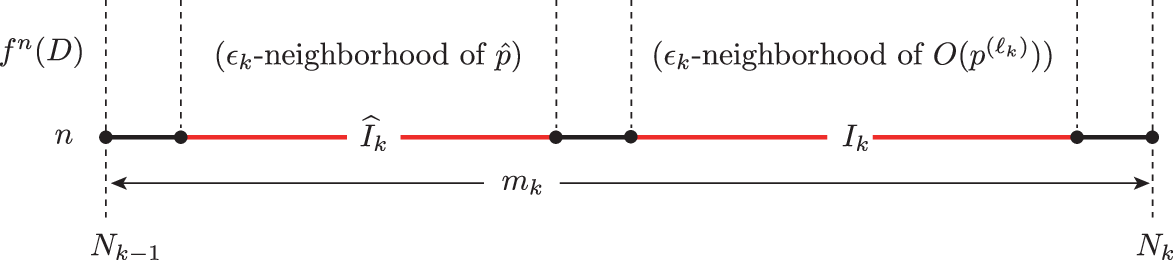}}
\caption{Travel of $f^n(D)$ for $n\in [N_{k-1},N_k)$.}
\label{f_line2}
\end{figure}

We say that a sequence of positive integers $\{ m_k\}_{k\geq 1}$ is \emph{moderate} if 
\begin{equation}\label{eq:moderate}
\lim _{k\to \infty} \frac{m_k}{N_k} =0.
\end{equation}
The following theorem is  
a key generalization  of 
 the idea behind a number of  estimates in 
\cite{KS2017} to the context of pointwise emergences
and will be used in the proof of Theorem \ref{thm:main}.

\begin{thm}\label{thm:2}
Let $M$ be a closed surface and  $2\leq r<\infty$.
For any Newhouse open set $\mathcal O\subset \mathrm{Diff}^r(M)$, 
 any element $\tilde f\in \mathcal{O}$ having an associated pair $(\hat p_{\tilde f}, \Lambda _{\tilde f})$,
  any neighborhood
$\mathcal{U}(\tilde f)$  of $\tilde f$ in $\mathcal O$, 
   and any     sequence of periodic points  $\{ p^{(\ell)} _{\tilde f}\}_{\ell \geq 0} \subset \Lambda _{\tilde f}$,  
there exists a
moderate sequence of positive integers  $\{ m_k \} _{k\geq 1}$ such that  
for any sequence of nonnegative
integers $\{\ell _k\}_{k\geq 1}$, one can find an element $f$ of $\mathcal{U}(\tilde f)$ 
having 
  a wandering domain 
  coded by  $\{ \ell _k \} _{k\geq 1}$ for $(\hat p_f, \{ p^{(\ell )} _f\} _{\ell \geq 0})$ over $\{ m_k\} _{k\geq 1}$,
where 
 $\hat p_ f$ and  $p^{(\ell)}_f$ are the continuations  of $p_{\tilde f}$ and $p^{(\ell )}_{\tilde f}$ for   $\ell \geq 0$, respectively. 
\end{thm}

\subsection{Adapted code}\label{subsection:findacode}

The final 
 step of the proof of Theorem \ref{thm:main} is to 
construct an adapted code in the following sense.
Denote by $\mathrm{per}(p)$ the period of a periodic point $p$.

\begin{thm}\label{thm:0411b}
  Let $f: M\to M$ be a $\mathcal C^r$ diffeomorphism on a compact 
    manifold $M$ with $r\geq 1$. 
For 
  any moderate sequence of positive integers $\{ m_k\} _{k\geq 1}$, any fixed point $\hat p$ and any sequence of periodic points $\{p^{(\ell)} \}_{\ell \geq 0}$, there is a  sequence  of nonnegative integers $\{ \ell _k\} _{k\geq 1} $ 
  such that the following holds: 
  Suppose that $f$ has a wandering domain $D$  coded by $\{ \ell _k  \} _{k\geq 1}$ for $(\hat p, \{p^{(\ell)}\}_{\ell \geq 0})$ over $\{ m_k\} _{k\geq 1}$.
Then we have 
\[
D \subset E (\{  \mu ^{(\ell)}\} _{\ell \geq 0} ,f) \quad \text{ with }
     \quad
     \mu ^{(\ell)} = (1-\zeta ) \delta _{\hat p}  + \zeta   \delta _{p^{(\ell )}}^{\mathrm{per}(p^{(\ell )})}, \; \ell \geq 0,
     \]
where $\zeta$  is the positive number given  in Definition \ref{def_Gamma_wandering}. 
\end{thm}

\subsection{Plan of the proof}

Observe that 
\[
N( \epsilon , \Delta (\{ (1-\zeta )\hat \mu + \zeta \mu ^{(\ell)} \}_{\ell \geq 0} )) = N(\zeta ^{-1}\epsilon , \Delta ( \{ \mu ^{(\ell)} \}_{\ell \geq 0} ) )
\]
 for any $\epsilon >0$, $\zeta \in (0,1]$, $\hat \mu \in \mathcal P(X)$   and $\{ \mu ^{(\ell )}\} _{\ell \geq 0} \subset  \mathcal P(X)$.
Hence,   \eqref{eq:1110}  immediately follows from  Theorems 
 \ref{thm:2} and  \ref{thm:0411b} by taking $ \{p^{(\ell)}_f\}_{\ell \geq 0})$ as
 \[
  \mathrm{Per} (f\vert _{\Lambda _f}) =\{ \hat p_f\} \cup \Big(\bigcup _{\ell \geq 0} O(p^{(\ell)} _f) \Big),
  \]
  where   $O(p^{(\ell )} _f)$ is the forward orbit of $p^{(\ell )}_f$ and   $\mathrm{Per} (f \vert _{\Lambda _f})$  is 
the set of periodic points of $f$ on  $\Lambda _f$.
Furthermore, as mentioned, \eqref{eq:0501b} is a consequence of \eqref{eq:prop1.2b} and \eqref{eq:1110}. 
Therefore, we get the conclusion of Theorem \ref{thm:main}.
We give the proof of Theorem \ref{thm:2} in Section \ref{section:4} and the proof of Theorem \ref{thm:0411b} in Section \ref{section:3}.

\section{Preliminary}\label{s:pre}
In this section we prove Lemma \ref{lem:0402}, together with some basic properties of  $\delta _x^n$ with respect to $ d $ that will be used in the following sections.
Let $f: X\to X$ be a continuous map on a compact metric space $X$ equipped with a metric $d_X$. 
Recall that $d$ is the first Wasserstein metric on $\mathcal P(X)$. 
  Since $X$ is compact, we can assume that $d_X(x,y) \leq 1$ for all $x, y\in X$ without loss of generality, and
the Kantorovich-Rubinstein
dual representation \eqref{eq:0404d2} implies that for each probability measures $\mu, \nu$ on $X$, 
\begin{equation}\label{eq:0404d}
 d(\mu ,\nu ) = \sup _{\varphi \in \mathrm{Lip} ^1(X, [0,1] )} \left\vert \int _X \varphi (x) d\mu (x) - \int _X \varphi (x) d\nu (x) \right\vert ,
\end{equation}
 where     $\mathrm{Lip} ^1(X, [0,1] )$ is the space of functions $\varphi$ on $X$  with values in $[0,1]$ such that the Lipschitz constant of $\varphi$ is bounded by $1$ (notice that for any $\varphi\in\mathrm{Lip} ^1(X, \mathbb R)$, one can find a constant $a$ such that $\varphi +a \in\mathrm{Lip} ^1(X, [0,1])$).  
\begin{lem}\label{lem:reset}
For any $m> n\geq 1$ and $x\in X$, we have
\[
 d \Big(\delta _x^m , \delta _{f^n(x)} ^{m-n} 
\Big) \leq \frac{2n}{m} .
\]
\end{lem}
\begin{proof}
For any continuous function $\varphi :X\to \mathbb R$ with value in $[0,1]$, 
\begin{align*}
\left\vert \int _X\varphi d\delta _x ^{m} - \int _X\varphi  d\delta _{f^n(x)} ^{m-n}\right\vert
\leq \left\vert \left( \frac{1}{m} - \frac{1}{m-n}\right) \sum _{j=n}^{m-1} \varphi (f^j(x)) \right\vert
+\left\vert  \frac{1}{m} \sum _{j=0}^{n-1} \varphi (f^j(x)) \right\vert ,
\end{align*}
which is bounded by $\frac{2n}{m}$, so we get the conclusion due to \eqref{eq:0404d}.
\end{proof}

The next lemma follows from a similar argument. 
\begin{lem}\label{lem:reset5}
For any $n\geq 1$ and $x\in X$, we have
\[
 d\big(\delta _x^n , \delta _{x} ^{n+1} 
\big) \leq \frac{2}{n+1} .
\]
\end{lem}

We also recall the following basic fact, 
refer to e.g.~\cite[Section 7.2]{Villani2003}. 
\begin{lem}\label{lem:reset2}
For each $x, y\in X$, 
\[
 d (\delta _x ,\delta _y ) = d_X(x, y) .
\]
\end{lem}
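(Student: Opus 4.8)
The plan is to identify the (unique) transport plan between the two Dirac measures and simply read off its cost.

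First I would check that $\Pi(\delta_x,\delta_y)$ consists of the single coupling $\delta_{(x,y)}$, the Dirac mass at the point $(x,y)\in M\times M$. Indeed, let $\pi\in\Pi(\delta_x,\delta_y)$. Since $M$ is a metric space the singletons $\{x\}$ and $\{y\}$ are Borel, and the marginal conditions $(p_1)_*\pi=\delta_x$ and $(p_2)_*\pi=\delta_y$ give
\[
\pi\big((M\setminus\{x\})\times M\big)=\delta_x(M\setminus\{x\})=0,\qquad \pi\big(M\times(M\setminus\{y\})\big)=\delta_y(M\setminus\{y\})=0.
\]
As $(M\times M)\setminus\{(x,y)\}$ is covered by these two $\pi$-null sets, we get $\pi(\{(x,y)\})=1$, i.e.\ $\pi=\delta_{(x,y)}$.

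Consequently the infimum in the definition of $d_{W^1}$ is taken over the one-point set $\{\delta_{(x,y)}\}$, and hence
\[
d_{W^1}(\delta_x,\delta_y)=\int_{M\times M}d_M(u,v)\,d\delta_{(x,y)}(u,v)=d_M(x,y).
\]
There is essentially no obstacle here; the only point to be handled with a little care is the elementary measure-theoretic fact that the prescribed marginals force $\pi$ to be supported on $\{(x,y)\}$. (Alternatively one could argue from the dual formula \eqref{eq:0404d}: the bound $|\varphi(x)-\varphi(y)|\le d_M(x,y)$ for $\varphi\in\mathrm{Lip}^1(M,[-1,1])$ is immediate from the Lipschitz condition, and the reverse inequality follows by testing against a suitable truncation of $z\mapsto d_M(z,y)$; but the primal argument above is shorter.)
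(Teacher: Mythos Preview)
Your argument is correct. It differs from the paper's proof, which works entirely through the Kantorovich--Rubinstein dual representation \eqref{eq:0404d}: the paper first bounds $d_{W^1}(\delta_x,\delta_y)\le d_M(x,y)$ using $|\varphi(x)-\varphi(y)|\le d_M(x,y)$ for any $\varphi\in\mathrm{Lip}^1(M,[-1,1])$, and then obtains the reverse inequality by testing against the explicit function $\varphi_0(z)=\max\{d_M(x,y)-d_M(x,z),0\}$. Your primal argument is slightly more economical in that it yields the equality in one step rather than two inequalities, and it avoids constructing a witness function; on the other hand, the paper's dual approach is in keeping with its stated intention to rely only on \eqref{eq:0404d}, and the same style of test-function construction is reused later (e.g.\ in Claim~\ref{c:0404c}). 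Both proofs are short and elementary, and you already sketch the dual alternative in your parenthetical remark.
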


Finally, we will use the following lemma.
\begin{lem}\label{lem:reset2b}
For each $L\geq 1$ and $ \mathbf t, \mathbf s\in A_L$, we have  
\[
 d (\mu _{\mathbf t} ,\mu _{\mathbf s} ) \leq (L+1) \vert \mathbf  t - \mathbf s\vert ,
\]
where $\mu _{\mathbf t}$ and $A_L$ are given in \eqref{eq:1008c} and \eqref{eq:1008b}.
\end{lem}
\begin{proof}
For any $\varphi \in \mathrm{Lip} ^1(X, [0, 1])$ and $\mathbf t=(t_0,\ldots ,t_L)$, $\mathbf s=(s_0,\ldots ,s_L)$ in $A_L$ with $L\geq 1$,
\begin{align*}
\left\vert \int _X \varphi d\mu _{\mathbf t} -  \int _X \varphi d\mu _{\mathbf s} \right\vert &\leq \sum _{\ell =0} ^L \left\vert  t_\ell -s_\ell \right\vert  \int_X\left\vert \varphi \right\vert d\mu ^{(\ell)}   \\
&\leq (L+1)\max _{0\leq \ell \leq L} \vert t_\ell -s_\ell \vert \leq (L+1)\vert \mathbf t -\mathbf  s\vert ,
\end{align*}
which implies the conclusion due to \eqref{eq:0404d}. 
\end{proof}

\begin{proof}[Proof of Lemma \ref{lem:0402}]
Fix $\epsilon >0$ and a  positive Lebesgue measure set $D$. 
Let $N_0$ be the maximal integer $N$ such that 
for all probability measures  $\{ \mu _j \}_{j=1}^N$ on $M$, the inequality in \eqref{eq:0403d} does not hold for any $x\in D$. Note that $\min _{x\in D}\mathscr E_x(\epsilon ,f) = N_0+1$.

Given probability measures $\{ \mu _j \}_{j=1}^{N_0}$ on $M$, by definition of $N_0$, we get
\[
\limsup _{n\to \infty}  \min _{1\leq j\leq N_0}d (\delta _x ^ n, \mu _j ) >  \epsilon  \quad \text{for any $x\in D$} .
\]
Therefore, it follows from Lebesgue's dominated convergence theorem together with \cite[Proposition 1.10]{Berger2017} that 
\begin{align*}
\limsup _{n\to \infty} \int _M  \min _{1\leq j\leq N_0}d(\delta _x ^ n, \mu _j ) d\mathrm{Leb} (x) 
> \mathrm{Leb}  (D) \epsilon . 
\end{align*}
Thus, the inequality in \eqref{dfn:metricemergence} with $\mathrm{Leb}(D) \epsilon $ instead of $\epsilon$ is not  satisfied by the probability measures $\{ \mu _j \}_{j=1}^{N_0}$, 
implying that 
$\mathscr E_{\mathrm{Leb}}( \mathrm{Leb}(D) \epsilon ) \geq N_0 +1$. 
This completes the proof. 
\end{proof}

\section{Proof of Theorem \ref{thm:2}}\label{section:4}

\subsection{Preliminary}\label{s:0312z}

The proof of Theorem \ref{thm:2} is based on the argument  in our previous work \cite[Theorem A]{KS2017}: 
the most important part is a modification of \emph{Critical Chain Lemma}  (\cite[Lemma 7.1]{KS2017}), 
but  the modified version of Critical Chain Lemma   can be  
 proven as the original version. 
In the rest of this section, we first briefly recall necessary definitions and dynamics in \cite{KS2017}, 
and precisely  describe how we should modify   Critical Chain Lemma, 
together with a short explanation for the reason why  the modification does not affect  the proof of Critical Chain Lemma.
Finally, we will complete the proof of Theorem \ref{thm:2}, by translating  the  argument 
in the proof of  Theorem A of  \cite{KS2017} after the establishment of  Critical Chain Lemma
  into our context.

\subsection*{Notation}
In this section, we will use the notation $N_0 , N_1,N_2$,  which are positive integers borrowed from \cite{KS2017}, but different from $N_k$ defined in Definition \eqref{def_Gamma_wandering}. 
To avoid notational confusion, we use $\widetilde N_k$ for an integer playing the role of $N_k$ in Definition \eqref{def_Gamma_wandering} (refer to \eqref{eqn_mkNk}).

\vspace{0.2cm}

Let $M$ be a closed surface and  $2\leq r<\infty$.
Let $\tilde f$ be an element of a Newhouse open set $\mathcal O\subset \mathrm{Diff}^r(M)$. 
By definition of Newhouse open sets,  $\tilde f$ has
 a dissipative saddle fixed point $\hat p _{\tilde f}$
 and a basic set $\Lambda _{\tilde f}$ such that $\hat p _{\tilde f}\in \Lambda _{\tilde f}$ and 
 $\tilde f$ has a persistent homoclinic tangency associated with $\Lambda _{\tilde f}$. 
In fact, $\tilde f^k$ has a basic set $\widetilde{\Lambda }$ on which $f^k$ is conjugate to a two-sided full shift of two symbols  $\{ 1, 2\}^{\mathbb Z}$  and $\Lambda = \bigcup _{j=0}^{k-1} \tilde f^j(\widetilde{\Lambda } )$ with some $k\in \mathbb N$. 
For simplicity, we assume that $f\vert _\Lambda$ is conjugate to the two-sided full shift of two symbols. 
We also fix a small neighborhood $\mathcal U(\tilde f)$ of $\tilde f$ in $\mathcal O$. 
Then, 
one can find  an element $f$ of $\mathcal{U}(\tilde f)$ which has the continuations $\hat p_f$ of $\hat p _{\tilde f}$ and $\Lambda _f$ of $\Lambda _{\tilde f}$ such that 
\begin{itemize}
\item[(S-i)] $ \Lambda _f$ contains $\hat p_f$; 
\item[(S-ii)] $f$ has
a quadratic tangency $q_f$ associated with $\hat p_f$; 
\item[(S-iii)] $f$ is linear in $U(\hat p_f) \cap f^{-1}(U (\hat p_f))$ with a small neighborhood $U(\hat p_f)$ of $\hat p_f$.
\end{itemize}
We refer to e.g.~\cite{PT1995} (compare  also with Section 3 in \cite{KS2017}).

We suppress $f$ from 
the  notations 
   $\hat p_f$, $\Lambda _f$ and $q_f$. 
By replacing the basic set $\Lambda$  by a smaller one if necessary, we can choose the linearizing coordinate in (S-iii) such that 
$\Lambda\subset S\subset S ^\prime $
 where $S=[0,2]\times [0,2]$ and $S ^\prime =[-2,2]\times [-2,2]$ with $\hat p=(0,0)$. 
Set 
$W_{\mathrm{loc}}^s(\hat p)=[-2,2]\times \{0\}$ and $W_{\mathrm{loc}}^u(\hat p)=\{0\}\times [-2,2]$.
Let $\mathcal{F}_{\mathrm{loc}}^{s}(\Lambda)$ and $\mathcal{F}_{\mathrm{loc}}^{u}(\Lambda)$ be a local stable foliation and a local unstable foliation on $S$ compatible 
with $W_{\mathrm{loc}}^{s}(\Lambda )$ and $W_{\mathrm{loc}}^{u}(\Lambda )$, respectively.
For $\sigma = s, u$, consider the projection $\pi^\sigma : S \to W_{\mathrm{loc}}^\sigma (\hat p)$ along the leaves of $\mathcal{F}_{\mathrm{loc}}^{\overline \sigma} (\Lambda)$ where $\overline s=u$ and $\overline u =s$, and  
 the Cantor set $K_\Lambda^{\sigma }=\pi^{\sigma }(\Lambda) $ in $W_{\mathrm{loc}}^{\sigma }(\hat p)$.

Let  $\sigma = s$ or $u$. 
We denote by $B^{\sigma }(0)$ the smallest interval in $W_{\mathrm{loc}}^{\sigma }(\hat p)$ containing $K^{\sigma }_\Lambda$.
There exists a Markov partition of $B^{\sigma }(0)$ for $K_\Lambda^{\sigma}$ which consists of sub-intervals $B^{\sigma}(1;1)$, $B^{\sigma }(1;2)$ 
of $B^{\sigma }(0)$ 
with $\partial B^{\sigma }(0)\cap B^{\sigma }(1;1)=\{ \hat p\}$.
Let $\Psi^{\sigma }:B^{\sigma }(1;1)\sqcup B^{\sigma }(1;2)\to B^{\sigma }(0)$ be the map defined 
by $\Psi^s=\pi^s\circ f^{-1}$ and $\Psi^u=\pi^u\circ f$, which is $\mathcal C^{1+\alpha}$ for some $0<\alpha<1$. 
For each integer $\ell \geq 1$ and $w_i\in \{1,2\}$ for $1\leq i\leq \ell$, 
we define the interval $B^{\sigma }(\ell ;w_1\cdots w_\ell)$, called a $\sigma $-\emph{bridge} of the $\ell$-th \emph{generation}, by
$$B^{\sigma }(\ell ; w_1\cdots w_\ell )=\bigl\{x\in B^{\sigma }(0)\, \mid \,(\Psi^{\sigma })^{i-1}(x)\in B^{\sigma }(1;w_i),\, i=1,\dots , \ell
\bigr\}.$$
Here we say that the word $(w_1\cdots w_\ell )$ is the \emph{itinerary} for the $\sigma$-bridge.
From the definition, we have 
\begin{equation}\label{eqn_Bsu}
\Psi^{\sigma }(B^{\sigma }( \ell ;w_1w_2\cdots w_\ell ))=B^{\sigma }(\ell -1;w_2\cdots w_\ell ).
\end{equation}

Let $B^{\sigma}$ be a $\sigma$-bridge  with $\sigma =s$ or $u$. 
The closure of a connected component of $B^\sigma \setminus K^\sigma _\Lambda$ is called a \emph{gap} of $K^\sigma _\Lambda$ in $B^\sigma$. 
Finally, we call $\mathbb{B}^{\sigma }=(\pi^{\sigma })^{-1}(B^{\sigma })$ 
 the 
\emph{bridge strip} of  $B^{\sigma}$, and 
$\mathbb{G}^{\sigma }=(\pi^{\sigma })^{-1}(G^{\sigma })$ 
 the \emph{gap strip} of $G^{\sigma}$ 
 (see Subsection 4.2 of \cite{KS2017} for details).

 \subsection{The dynamics in \cite{KS2017}}\label{s:0312a}
We used in \cite{KS2017} two preliminary perturbations (Section 3, 4, 5), and two main perturbations (Section 5, 7).
 One of the preliminary perturbations 
  is given for the perturbed dynamics to satisfy
 the conditions 
  (S-i), (S-ii), (S-iii). 
 The other 
  is given to satisfy   (S-iv), (S-v), (S-vi), (S-vii) of \cite[Section 3]{KS2017}. 
  We now 
  let   $f$ be  an element of $\mathcal U(\tilde f)$ satisfying all the conditions from (S-i) to (S-vii) (note that we here use the notation $f$  for $f_{\mu _n}$  in (S-iv), (S-v), (S-vi), (S-vii) 
 of \cite{KS2017} with large integer $n=n_*$ given in \cite[\S 5.2]{KS2017}). 
We merely remember that (S-iv), (S-v) and (S-vi) lead to the existence of another basic set $\Gamma $  of $f$ near the homoclinic  tangency $q$ such that 
\begin{itemize}
\item $\Lambda$ and $\Gamma $ are homoclinically related: both $W^u(\Lambda ) \cap W^s(\Gamma )$ and $W^s(\Lambda ) \cap W^u(\Gamma )$ contain non-trivial transverse intersections, 
\item there exists
 a heteroclinic tangency curve $L$ 
 between 
 $\Lambda$   and 
  $\Gamma$: 
  there are a smooth arc $L$, 
   a local stable foliation $\mathcal F_{\mathrm{loc}}^s (\Gamma )$ of $\Gamma$ which is compatible  with a local stable manifold of $\Gamma$ on a compact region $E$ containing $\Gamma$, and positive integers $N_0$, $N_2$ such that 
 $L \cap f^{-N_0} (\mathcal F_{\mathrm{loc}}^s(\Gamma ) ) 
  = L$ and 
$L \cap f^{N_2} (\mathcal F_{\mathrm{loc}}^u(\Lambda ))$ is a sub-arc of $L$ each element of which is a quadratic tangency of 
 $f^{N_2} (\mathcal F_{\mathrm{loc}}^u(\Lambda ))$ and   $f^{-N_0} (\mathcal F_{\mathrm{loc}}^s(\Gamma ))$. 
   \end{itemize} 
   Refer to \cite[\S 5.1]{KS2017} and see Figure \ref{f_bstrip}. 
(We note that the basic set $\Gamma$ is written as $\Gamma _m$ in \cite{KS2017}, where $m$ is the period of a periodic point included in $\Gamma _m$.) 
 
 The  key dynamics in \cite{KS2017}  is 
  the  return map on ($L$ and)  a neighborhood $U(L)$ of    $L$, 
  which is the composition  of 3 dynamics: 
 \begin{itemize}
 \item[(a)] the transient dynamics  from $U(L)$ to $\Lambda$ through 
 the homoclinic relation between $\Lambda$ and $\Gamma$, 
 \item[(b)] the hyperbolic dynamics on $\Lambda$, 
 \item[(c)] the transient dynamics from $\Lambda$ to $U(L)$ through 
the unstable foliation  $f^{N_2} (\mathcal F_{\mathrm{loc}}^u(\Lambda ))$. 
 \end{itemize}
To be precise, we borrow more notations from \cite{KS2017}. 
Let  $z_0$ be the positive integer satisfying (8.5) in \cite[Subsection 8.1]{KS2017} and $\{ z_k\} _{k\geq 1}$ arbitrary sequence of integers such that $z_k \in \{ z_0, z_0+1\}$ for each $k\geq 1$. 
Let  $\widehat{\underline{w}}_{\,k} $ be the itinerary given in \cite[Lemma 7.1]{KS2017} for  $k\geq 1$. 
The itinerary originates from Linear Growth Lemma (\cite[Lemma 6.1]{KS2017}), which implies  that if we denote  the length of $\widehat{\underline{w}}_{\,k} $ by $\widehat n_k$, then
\begin{itemize}
\item there is a constant $\alpha >0$ such that the $\alpha \vert B^s (\widehat n_k ; \widehat{\underline{w}}_{\,k} )\vert$-neighborhood of $ B^s (\widehat n_k ; \widehat{\underline{w}}_{\,k} )$ and $\alpha \vert  B^s (\widehat n_{k+1} ; \widehat{\underline{w}}_{\,k +1} )\vert $-neighborhood of $B^s (\widehat n_{k+1} ; \widehat{\underline{w}}_{\,k+1} )$ are disjoint,
\item  $\widehat n_k$  is of order $k$ (so the lemma is called Linear Growth Lemma). 
\end{itemize}
Moreover,
for arbitrary sequence $\{ \underline{v}_{\,k}\} _{k\geq 1}$ of itineraries with $ \underline{v}_{\,k+1} \in \{ 1, 2\} ^{k}$, 
we consider the bridge $B_{k}^{u} \equiv B_k^u(z_k ,\underline{v}_{\,k+1} )$ of $K^u_\Lambda$ and the bridge $B_{k+1}^{s*} \equiv B_{k+1}^{s*}(z_k , \underline{v}_{\,k+1})$
of $K^s_\Lambda$ given by
\begin{equation}\label{eqn_CC1}
\begin{split}
B_{k}^{u}&=B^u(z_kk^2+k^2+k+ \widehat n_{k+1};\,\underline{1}^{(z_kk^2)}\underline{2}^{(k^2)}\,\underline{v}_{\,k+1}\,[\widehat{\underline{w}}_{\,k+1}]^{-1}),\\
B_{k+1}^{s*}&=B^s(z_kk^2+k^2+k+\widehat n_{k+1};\,\widehat{\underline{w}}_{\,k+1}\,[\underline{v}_{\,k+1}]^{-1}\underline{2}^{(k^2)}\,\underline{1}^{(z_kk^2)}),
\end{split}
\end{equation}
where $\underline{1}^{(\ell )} $ (resp.~$\underline{2}^{(\ell )}$) is the itinerary consisting  of only $1$ (resp.~$2$) with length $\ell $ and $[\underline{w}]^{-1} = (w_{\ell} \cdots w_2 w_1)$ for each $\underline{w} = (w_1 w_2\cdots w_\ell) \in \{ 1,2 \} ^\ell$. 
It follows from Subsection 5.2 of \cite{KS2017} (refer also to Lemma 7.1~(2) of \cite{KS2017}) that there are a connected component $S _k \subset S$,   integers $N_1$ and $\widehat i_k$ of order $k$ for each $k\geq 1$ such that 
 $f^{-(\widehat i_k + N_1)}\left(S _k\cap ([0,1] \times \{t \} )\right)$ is 
  the intersection of $E$ and a leaf of $\mathcal F_{\mathrm{loc}} ^s(\Gamma)$ for each $t\in [0,1]$. (In terms and notations of \cite{KS2017}, $S _k$ is the sub-strip  of $S$ such that $f^{-(\widehat i_k+N_1)}(S _k) $ is the bridge stripe of a $u$-bridge $\widehat A^u_k$ of $\Gamma$ along $\mathcal F_{\mathrm{loc}} ^s(\Gamma)$, and denoted by $S(\widehat A^u_k)$.) 
We let the arc $L$ transversely return to itself by backward iterations of $f$ as
\begin{multline*}
 \widetilde L = f^{-N_2} (L \cap f^{N_2}(\mathcal F^u_{\mathrm{loc}}(\Lambda )) ), \\
 \widetilde L_k = f^{- (z_kk^2+k^2+k+\widehat n_{k+1})
 }(\mathbb B^{s*}_{k+1} \cap \widetilde L), \quad 
 L_k =f^{- (N_0 + \hat i_k + N_1)} (\widetilde L_k \cap S _k) ,
 \end{multline*}
so that  $ L$ and  $L_k$ have a transverse intersection $x_k$ for any large $k$ (cf.~\cite[\S 7.2]{KS2017}). 
Then, noting that $f^{z_kk^2+k^2+k+\widehat n_{k+1}} (\mathbb B_{k}^{u} ) = \mathbb B_{k+1}^{s*}$ by construction, we get 
\begin{itemize}
\item[(a)] $\widetilde x_k = f^{N_0 + \hat i_k + N_1}(x_k) \in \mathbb B_{k}^{u}$, mapped from $L$ to $\Lambda$, 
\item[(b)] $\widehat x_k =f^{z_kk^2+k^2+k+\widehat n_{k+1}}(\widetilde x_k )\in  \mathbb B_{k+1}^{s*}$, mapped on $\Lambda$,
\item[(c)] $f^{N_2}(\widehat x_k) \in L$, mapped from $\Lambda$ to $L$.
\end{itemize}
The second main perturbation 
(i.e.~the  perturbation in Subsection 7.2 of \cite{KS2017}) are made, with the notation $f$ again for the perturbed dynamics, 
 to get the relation
\begin{equation}\label{eq:1109}
f^{m_k}(x_k) = x_{k+1}, \quad \text{$m_k = N_2 + ( z_kk^2+k^2+k+\widehat n_{k+1}) + ( N_0 + \hat i_k + N_1 )$}
\end{equation}
for all   large $k$,
 and Critical Chain Lemma ensures that the perturbation can be  arbitrary small
 (see also the next subsection).
 
Finally, let $R_k$ be the  rectangle  given in Subsection 8.2 of \cite{KS2017}, where the center of $R_k$ is  $x_k$ and $R_k \cap R_{k^\prime} =\emptyset$ for each $k\neq k^\prime$. 
We notice that the distance between $x_k$ and $x_{k+1}$ is  large in the sense  of the first item of the above  properties of  $\widehat{\underline{w}}_{\,k} $ (and similar property for the $u$-bridges of $\Gamma$  in \cite[Lemma 6.1]{KS2017}).
Indeed,
Rectangle Lemma (\cite[Lemma 8.2]{KS2017}) states that $f^{m_k}(R_k) \subset R_{k+1}$
for sufficiently large $k$, and thus $D=R_k$ with a large $k$ is  a wandering domain. 

Furthermore, 
both $N_0 + \hat i_k + N_1$ and $N_2$ as well as the length of $\widehat{\underline{w}}_{\,k+1}\,[\underline{v}_{\,k+1}]^{-1}$ 
are at most of order $k$, while the lengths of $\underline{1}^{(z_kk^2)}$ and $\underline{2}^{(k^2)}$ are of order $k^2$ 
(so we called the dynamics (a) and (c) \emph{transient}). 
Therefore,
we can  find sequences of discrete intervals $\{ \widehat I_k\} _{k\geq 1}$  and 
 $\{ I_k\} _{k\geq 1}$, and  
 a sequence of positive measure $\{ \epsilon _k\} _{k\geq 1}$ with $\lim _{k\to \infty} \epsilon _k =0$ such that 
 \begin{equation}\label{eq:1109b}
 \widehat I_k \cup I_k \subset [\widetilde N_{k-1} , \widetilde N_k -1] \quad \text{where $\displaystyle \widetilde N_k=\sum _{j=1}^k m_j$} ,
 \end{equation}
 \begin{equation}\label{eq:1109f}
\lim _{k\to \infty}  \frac{\# \widehat I_k + \# I_k}{m_k} =1, \quad \lim _{k\to \infty} \left\vert  \frac{\# \widehat I_k}{m_k} - \frac{z_k }{z_k+1} \right\vert =0 ,
\end{equation}
and for any sufficiently large $k$,
 \begin{equation}\label{eq:1109g}
f^n (D) \subset B_{\epsilon _k} (\hat p) \quad \text{if $n\in \widehat I_k$} \quad \text{and } \quad f^n (D) \subset B_{\epsilon _k} (p^\prime ) \quad \text{if $n\in I_k$},  
\end{equation}
where $p^\prime$ is the another saddle fixed point of $f$ in $\Lambda$.
Refer to 
 Subsection \ref{subsection:B3} for the calculation, and
compare with Definition \ref{def_Gamma_wandering}.

 \subsection{Modification in Critical Chain Lemma}\label{s:0312b}

Let $\pi ^s : B^s(0) \to L$ be the projection along the leaves of $f^{N_2}(\mathcal F_{\mathrm{loc}} ^u (\Lambda))$ and $\pi _k^u : B^u(0) \to L$  the projection given by
\[
\pi _k ^u= \pi ^u \circ f^{-( \widehat i_k +N_1)} \circ \pi _{S_k} ,
\]
where $\pi ^u :E\to L$ is the projection  along the leaves of $f^{-N_0} (\mathcal F_{\mathrm{loc}}^s(\Gamma ))$ and $\pi _{S_k} $ is the projection from $B^u (0) $ to a component of the boundary of $S_k$ along the leaves of $f^{(N_1+\widehat i_k)}(\mathcal F^s_{\mathrm{loc}}(\Gamma))$. 
Let $B_{k,L}^u = \pi _k^u (B_k^u)$ and $B_{k,L}^{s*} =\pi ^s (B_k^{s*})$ (cf.~\cite[\S 5.1]{KS2017}).

Critical Chain Lemma states that there are constants $\epsilon _0 >0$, $r>1$ and an interval $J_k\subset (-\epsilon _0 r^{-k}, \epsilon _0 r^k)$ such that
\begin{equation}\label{eq:1109e2}
(B_{k+1,L}^{s*}  +t ) \cap B_{k,L}^u \neq \emptyset \quad \text{if and only if $t\in J_{k+1}$}.
\end{equation}
The second main perturbation is of the form 
\begin{equation}\label{eq:1109e}
\widehat x_k +u_k = f^{-N_2}(x_{k+1}) \quad \text{on $\widetilde L$}
\end{equation} 
to obtain \eqref{eq:1109}, and \eqref{eq:1109e2} 
ensures that $\vert u_k\vert $ is of order $r_1^{-k}$ with some $r_1>1$.

Let $\widehat{\underline{u}}_{\,k}$ be arbitrary sequence of itineraries with  $\widehat{\underline{u}}_{\,k} \in \{ 1, 2\} ^{k^2 +3k+1}$.
We modify Critical Chain Lemma by replacing $B_k^u$ and $B_{k+1}^{s*}$ in \eqref{eqn_CC1} with
$B_k^u \equiv B_k^u(z_0 , {\underline{u}}_{\,k})$ and $B_{k+1}^{s*} \equiv B_{k+1}^{s*}(z_0, {\underline{u}}_{\,k})$ 
given by 
\begin{equation}\label{eqn_CC0}
\begin{split}
B_{k}^u&=B^u(z_0k^2+k^2+3k+1+\widehat n_{k+1};\,\underline{1}^{(z_0k^2)}\,\widehat{\underline{u}}_{\,k}\,[\widehat{\underline{w}}_{\,k+1}]^{-1}),\\
B_{k+1}^{s*}&=B^s(z_0k^2+k^2+3k+1+\widehat n_{k+1};\,\widehat{\underline{w}}_{\,k+1}\,[\widehat{\underline{u}}_{\,k}]^{-1}\,\underline{1}^{(z_0k^2)})  .
\end{split}
\end{equation}
In \cite{KS2017}, 
the itinerary $\underline{2}^{(k^2)}$ together with the integer $z_k$ in  \eqref{eqn_CC1} is 
 chosen  as 
 the 
  wandering domain 
$D$ consists of points with historic behavior (recall \eqref{eq:1109b}, \eqref{eq:1109f} and  \eqref{eq:1109g}), 
and the itinerary $\underline{v}_{\,k+1}$ is used just to show that the $\omega$-limit set of the forward orbit 
contains $\Lambda$.
In Theorem \ref{thm:2}, 
such properties are not required.
So, all $z_k$ are unified to $z_0$, and $2^{(k^2)}$ and $\underline{v}_{\,k+1}$ are deleted.
However, for the proof of the existence of a wandering domain, it is crucial that 
the orbit stays long time in a small neighborhood of $\hat p$, and that the distance between $x_k$ and $x_{k+1}$ are sufficiently large.
So the roles of the itineraries $\underline{1}^{(z_0k^2)}$ and $\widehat{\underline{w}}_{\,k+1}$ are indispensable.
On the other hand, the itinerary $\underline{2}^{(k^2)}$ can be replaced by any itinerary of 
length $k^2+O(k)$.
Hence one can use any itinerary $\widehat{\underline{u}}_{\,k}$ with $|\,\widehat{\underline{u}}_{\,k}|=k^2+3k+1$ instead of $\underline{2}^{(k^2)}$.

\subsection{The end of the proof of Theorem \ref{thm:2}}\label{subsection:B3} 
Here we set 
$\widehat j_k =N_0 + \widehat i_k +N_1$ and 
\begin{equation}\label{eqn_mkNk}
m_k=N_2 + (z_0k^2 +k^2 +3k+1 +\widehat n_{k+1}) +\widehat j_k, 
\quad \widetilde N_k=\sum_{j=1}^k m_j
\end{equation}
(instead of \eqref{eq:1109} and \eqref{eq:1109b}).
As in the proof of Theorem A in \cite{KS2017}, there exists an element $f$ of $\mathcal{U} (\tilde f)$ which 
has a contracting wandering domain $D$ such that $f^{\widehat j_k+ \widetilde  N_{k-1}}(D)$ is contained 
in the gap strip $\mathbb{G}_{k}^u$ for all sufficiently large $k$. 
Since the second perturbation  is made only in the interior of 
$\mathbb{G}^u(0)$ (in fact $\widetilde L$ is included in $\mathbb{G}^u(0)$, recall also  \eqref{eq:1109e}), 
$\Lambda$ and $\hat p$ do not change by the perturbation.
See Figure \ref{f_bstrip}.
\begin{figure}[hbt]
\centering
\scalebox{0.6}{\includegraphics[clip]{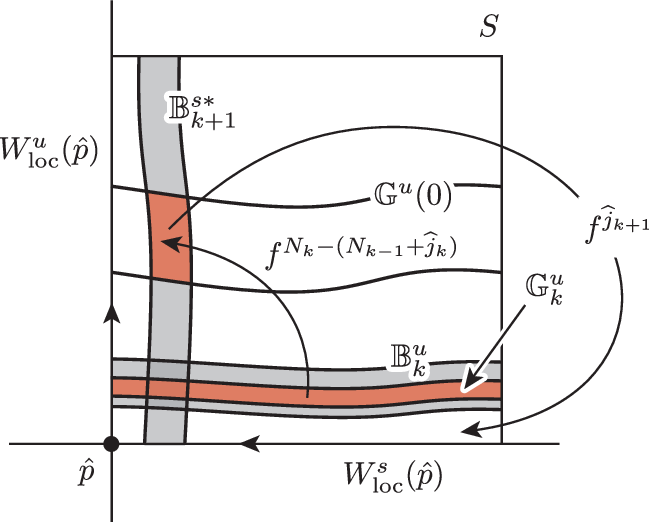}}
\caption{Travels of $D$ by $f$.}
\label{f_bstrip}
\end{figure}
According to Lemma 7.1 in \cite{KS2017}, $\widehat{\underline{w}}_{\,k+1}$ is 
the itinerary of length $\widehat n_{k+1}=O(k)$ which is arranged such that $f^{\widetilde N_k}(D)\subset \mathbb{B}_{k+1}^{s*}
\cap \mathbb{G}^u(0)$ 
is sent into $\mathbb{G}_{k+1}^u\subset \mathbb{B}_{k+1}^u$ by $f^{\widehat j_{k+1}}$.
Since $m_k=O(k^2)$ and $\widetilde  N_k=O(k^3)$, we have $\lim_{k\to \infty} m_k/ \widetilde N_k=0$.
This means that $\{m_k\}_{k\geq 1}$ is moderate.

Now we take a sequence $\{\ell_k\}_{k\geq 1}$ of non-negative integers arbitrarily.
For any non-negative integer $a$ with $\{k\,|\,\ell_k=a\}\neq\emptyset$, set  $\kappa(a)=\min\{k\,|\,\ell_k=a\}$.
Let $p^{(\ell_k)}$ be the periodic point of $\Lambda$ corresponding to the bi-infinite itinerary 
$(\underline{y}_{\,k})^{\mathbb{Z}}$, where
$$\underline{y}_{\,k}=1\cdot\underline{2}^{(\kappa(a))},$$
for $a=\ell _k$. 
Then
\begin{equation}\label{eqn_per_p}
\mathrm{per}(p^{(\ell_k)})=\kappa(a)+1\leq k+1.
\end{equation}
If $a\neq a'$, then $\{k\,|\,\ell_k=a\}\cap \{k'\,|\,\ell_{k'}=a'\}=\emptyset$ and 
hence $\kappa(a)\neq \kappa(a')$.
It follows that the orbit of $p^{(\ell_k)}$ is disjoint from that of $p^{(\ell_{k'})}$ if $\ell_k\neq\ell_{k'}$. 
By \eqref{eqn_per_p}, there exists an integer $s(k)$ with $k^2\leq s(k)\leq k^2+k+1$ which 
is a multiple of $\mathrm{per}(p^{(\ell_k)})$.
Then $q(k)=s(k)/\mathrm{per}(p^{(\ell_k)})$ is a positive integer.
Consider itineraries $\underline{a}_{\,k}$ and $\underline{b}_{\,k}$ with 
$|\,\underline{a}_{\,k}|=k$,  $|\,\underline{b}_{\,k}|=k^2+2k+1-s(k)$ and such that 
$\widehat{\underline{u}}_{\,k}=\underline{a}_{\,k}(\underline{y}_{\,k})^{(q(k))}\underline{b}_{\,k}$ 
is a sub-itinerary of $(\underline{y}_{\,k})^{(3q(k))}$.
From our definition, we have $k\leq |\,\underline{b}_{\,k}|\leq 2k+1$ and 
$|\,\widehat{\underline{u}}_{\,k}|=k^2+3k+1$, the latter of which is one of our required conditions.
Consider the discrete intervals $\widehat I_k=[\widetilde N_{k-1}+\widehat j_k+k+1,\widetilde N_k'-k]$ and 
$I_k=[\widetilde N_k'+k+1,\widetilde N_k-|\,\underline{b}_{\,k}|-\widehat n_{k+1}]$, where $\widetilde N_k'=z_0k^2 + \widehat j_k + \widetilde N_{k-1}$.
See Figure \ref{f_line}.
\begin{figure}[H]
\centering
\scalebox{0.56}{\includegraphics[clip]{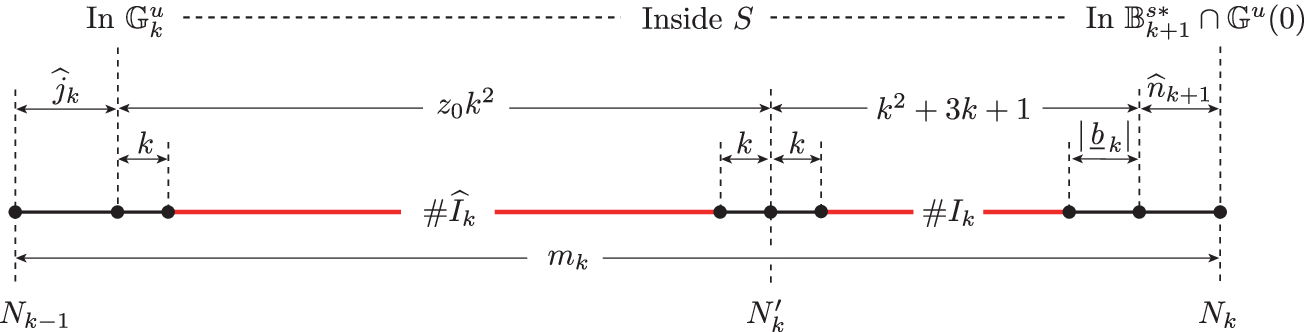}}
\caption{Locations of $f^j(D)$.}
\label{f_line}
\end{figure}
Then we have
$$\# I_k=s(k)=q(k)\mathrm{per}(p^{(\ell_k)}).$$
Moreover, since $m_k=(z_0+1)k^2+O(k)$ and $\# I_k=k^2+O(k)$, 
$$\zeta:=\lim_{k\to \infty}\frac{\# I_k}{m_k}=\frac1{z_0+1}>0.$$
By \eqref{eqn_Bsu}, for any $j\in \widehat I_k$, 
$f^j(D)\subset\mathbb{B}^u(k;\,\underline{1}^{(k)})\cap \mathbb{B}^s(k;\,\underline{1}^{(k)})$.
Similarly, for any $j\in I_k$, $f^j(D)\subset\mathbb{B}^u(k;\,y_{k,j})\cap \mathbb{B}^s(k;\,[y_{k,j}]^{-1})$ 
for some sub-itinerary $y_{k,j}$ of $(y_k)^{(q(k))}$ of length $k$.
Since the diameters of $\mathbb{B}^u(k;\,\underline{1}^{(k)})\cap \mathbb{B}^s(k;\,\underline{1}^{(k)})$ and 
$\mathbb{B}^u(k;\,y_{k,j})\cap \mathbb{B}^s(k;\,[y_{k,j}]^{-1})$ uniformly converge to zero as $k\rightarrow\infty$, 
$f$ satisfies the property \eqref{D_3} of Definition \ref{def_Gamma_wandering}.
This completes the proof of Theorem \ref{thm:2}.

\section{Proof of Theorem \ref{thm:0411b}}\label{section:3}

In this section,  we will prove Theorem \ref{thm:0411b}.
Let $f$ be a $\mathcal C^r$ diffeomorphism on a closed surface $M$. 
We fix a moderate sequence of positive integers $\{ m_k\}_{k\geq 1}$, 
a fixed point $\hat p$ and a sequence of 
periodic points $\{ p^{(\ell )} \}_{\ell \geq 0}$
 throughout the rest of this section.

\subsection{Reduction to $A_L$}\label{S:3.1} 
We start the proof of Theorem \ref{thm:0411b} 
by approximating the empirical measures $\{ \delta _x^n\} _{n\geq 0}$ along the orbit of $x$ in  a wandering domain with a code by 
measures $\mu _{\mathbf t}$ with parameters $\mathbf t\in A_L$ induced by 
   the code.

 \begin{dfn}\label{dfn:1008b}
For each  finite increasing sequence  $\mathbf k = \{ k(\ell )\} _{\ell =-1}^L$
of   positive integers  (i.e.~$k(\ell -1)<k(\ell )$ for $\ell \in [0,L]$),
we say that a sequence of nonnegative integers $\{  \ell _k\}_{k\geq 1}$ is \emph{associated  with $\mathbf k$}  
 if 
\[
\ell _k = 
\ell \quad \text{for all  $\ell \in [0, L]$  and $k\in [  k  ( \ell -1) +1 ,  k ( \ell ) ]$}.
\]
See Figure \ref{f1008} for the travel of $f^n (D)$ for $D$ coded by $\{ \ell _k\} _{k\geq 1} $ associated with $\{ k(\ell )\} _{\ell =-1}^L$ (compare with Definition \ref{def_Gamma_wandering} and Figure \ref{f_line2}).
\end{dfn}

\begin{figure}[hbt]
\centering
\scalebox{0.6}{\includegraphics[clip]{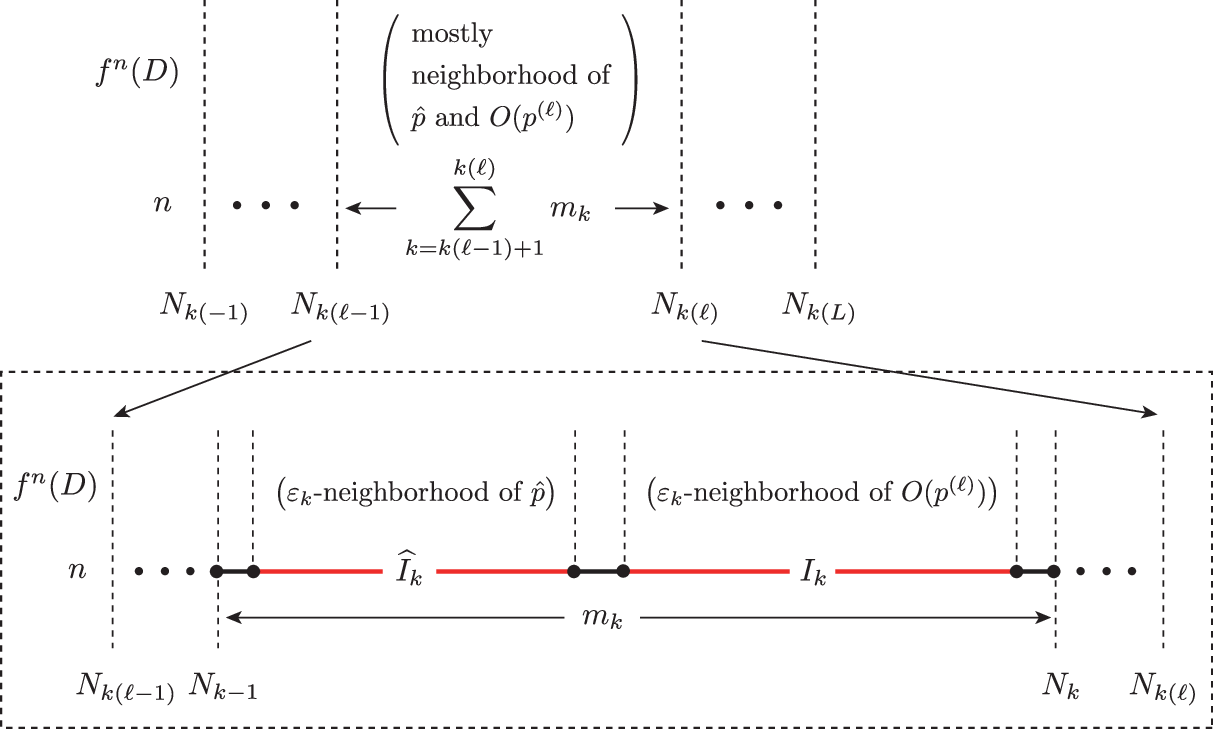}}
\caption{Travel of $f^n(D)$ for  $D$ coded by $\{ \ell _k\} _{k\geq 1}$ associated with $\{ k(\ell ) \} _{\ell =-1}^L$}
\label{f1008}
\end{figure}

For each  finite increasing sequence 
of  positive integers  $\mathbf k = \{ k(\ell )\} _{\ell =-1}^L$, we define $\overline{ M} (\mathbf k)\in
  \mathbb N^{L+1}$ by
\[
\left(\overline{ M}( \mathbf k) \right)_\ell = 
\sum _{k =k (\ell -1 )+1}^{k (\ell ) } m_k \quad \text{for $\ell \in [0,L]$.}
\]
Furthermore, for each $\mathbf{M} =(M_0, \ldots ,M_L)\in \mathbb N^{L+1}$, we define $\bar t (\mathbf M) \in  A_L$ by 
\begin{equation}\label{eq:0316b}
\bar t( \mathbf M ) = \left( \frac{M_0}{S_L} , \frac{M_1}{S_L} , \ldots , \frac{M_L}{S_L} \right) \quad \text{with} \quad S_\ell = M_0+\cdots +M_\ell .
\end{equation}
For a  wandering domain $D$ coded by a sequence of nonnegative integers over $\{ m_k\} _{k\geq 1}$, let $k_D$ be the minimal integer such that (C2) and (C3) in Definition \ref{def_Gamma_wandering} hold for all $k\geq k_D$.  
Recall the notation $N_k =\sum _{j=1}^{k} m_j$. 

\begin{lem}\label{prop:13}
Let $L\geq 1$ 
and  $\mathbf k = \{ k(\ell )\} _{\ell =-1}^L$ a finite increasing sequence 
of   positive integers. 
Let  $\{  \ell _k\}_{k\geq 1}$
 be a sequence of nonnegative integers associated with $\mathbf k$. 
 Let $f$ be  a $\mathcal C^r $ diffeomorphism  having  a wandering domain $D$  coded by $\{ \ell _k\}_{k\geq 1}$  for $( \hat p, \{ p^{(\ell )} \} _{\ell \geq 0})$ over $\{ m_k\} _{k\geq 1}$. 
 Suppose that $k(-1) \geq k_D$.
Then,
 for any $x\in D$, 
 \begin{multline*}
 d \left( \delta _x^{N_{k(L)}} (f) , \mu _{\overline t (\overline M(\mathbf k ))} (f ) \right) 
\leq 
 \frac{2N_{k(-1)}}{N_{k(L)}} + 2 \max_{k\in [k( -1) +1, k(L )]} \epsilon _{ k} \\
 +2\max _{0\leq \ell \leq L} \left\vert \frac{\sum _{k=k(\ell -1)+1}^{k(\ell )} \# I_k}{\sum_{k=k(\ell -1)+1}^{k(\ell )}m_k} -\zeta \right\vert +2\max _{0\leq \ell \leq L}\left\vert 1 - \frac{\sum _{k=k(\ell -1)+1}^{k(\ell )}( \#I_k + \# \widehat I_k)}{\sum _{k=k(\ell -1)+1}^{k(\ell )} m_k} \right\vert ,
\end{multline*}
where $\mu _{\mathbf t} = \sum _{\ell =0}^L t_\ell \mu ^{(\ell)}$ for $\mathbf t=(t_0,\ldots ,t_L)$ and $\mu ^{(\ell)}$ is given in Theorem \ref{thm:0411b}.
\end{lem}

\begin{remark}\label{rmk:tildeepsilon}
It is easy to see that by (C1) and (C3), both of
\[
\sup _{k_2>k_1+1} \left\vert\frac{\sum_{k=k_1+1}^{k_2}  \#I_k}{\sum _{k=k_1+1}^{k_2} m_k} -\zeta \right\vert  \quad 
\text{and}
\quad \sup _{k_2>k_1+1} \left\vert 1 - \frac{\sum_{k=k_1+1}^{k_2}  (\#I_k + \# \widehat I_k)}{\sum _{k=k_1+1}^{k_2} m_k}  \right\vert 
\]
go to $0$ as $k_1\to \infty$. 
\end{remark}

\begin{proof}[Proof of Lemma \ref{prop:13}]
Fix $x\in D$. 
For each $k\geq 1$, let 
\[
s_{1, k} = \sum _{n\in \widehat I_k} \delta _{f^n(x)}, \quad
 s_{2, k} = 
   \sum _{n\in I_k} \delta _{f^n(x)}, \quad 
 s_{3, k} = 
 \sum _{n\in [N_{k-1}, N_k-1] - \widehat I_k - I_k} \delta _{f^n(x)}.
\]
Let  $s_{j} (\ell ) =\sum _{k=k(\ell -1)+1}^{k(\ell ) } s_{j, k}
$  for each $j=1, 2, 3$ and $\ell \geq 0$.
Then, we have a decomposition 
\begin{equation}\label{eq:0314a1}
\sum _{j=N_{k(-1)}} ^{N_{k(L)} -1} \delta _{f^j(x)} 
= 
\sum _{\ell =0}^L 
(s_{1}(\ell )+s_{2} (\ell ) +s_{3} (\ell )) . 
\end{equation}
Note also that, for each $j=1,2, 3$, 
\begin{equation}\label{eq:0314a2}
\frac{\sum _{\ell =0}^L s_j(\ell )}{N_{k(L) } -N_{k(-1)}} = \sum _{\ell =0}^L \frac{s_j(\ell ) }{\sum _{k=k(\ell -1)+1}^{k(\ell ) } m_k} \times t_\ell , 
\end{equation}
where $t_\ell =\frac{\sum _{k=k(\ell -1)+1}^{k(\ell ) } m_k}{\sum _{\ell =0}^L\sum _{k=k(\ell -1)+1}^{k(\ell ) } m_k}$, and that $\overline t (\overline M(\mathbf k) )=(t_0, \ldots ,t_L)$.

Fix $\varphi \in \mathrm{Lip} ^1(M, [0,1])$. Since $f^n(x) \in B_{\epsilon _k}(\hat p)$ for  all $k\geq 1$ and $n\in \widehat I_k$, it follows from  Lemma \ref{lem:reset2} that 
\[
\left\vert \frac{ \int _X \varphi ds _1(\ell ) }{ \sum _{k=k(\ell -1)+1}^{k(\ell ) }  m_k } -  \frac{ \sum _{k=k(\ell -1)+1}^{k(\ell ) }  \# \widehat I_k}{ \sum _{k=k(\ell -1)+1}^{k(\ell ) }  m_k }  \int _X\varphi d\delta _{\hat p}  \right\vert
 \leq \max_{k(\ell -1)+1 \leq k\leq k(\ell)} \epsilon _k
\]
for every $\ell \in [0,L]$. 
So we have  
\begin{multline}\label{eq:0314a3}
 \left\vert \frac{\int _X \varphi ds_1 (\ell )}{ \sum _{k=k(\ell -1)+1}^{k(\ell ) } m _k} -  (1-\zeta )\int _X\varphi d\delta _{\hat p}  \right\vert \\
 \leq \max_{k(\ell -1)+1 \leq k\leq k(\ell)} \epsilon _k 
  +   \left\vert \frac{ \sum _{k=k(\ell -1)+1}^{k(\ell ) } \# I_k}{ \sum _{k=k(\ell -1)+1}^{k(\ell ) } m_k } -\zeta \right\vert
  + \left\vert  1-  \frac{  \sum _{k=k(\ell -1)+1}^{k(\ell ) } (\# \widehat I_k +\# I_k )}{ \sum _{k=k(\ell -1)+1}^{k(\ell ) } m_k} \right\vert . 
\end{multline}

Similarly,  
since $f^n(x) \in B_{\epsilon _k}(f^n( p^{(\ell _k)} ))$ for  all  $k\geq 1$ and $n\in I_k$,  
  and 
 $\# I_k$ is a multiple of 
$ \mathrm{per} (p^{(\ell _k)})$, we get that 
\[
\left\vert \int _X \varphi d   s_{2,k }  -  \# I_k \int _X \varphi d \mu ^{(\ell _k)}\right\vert 
\leq \#  I_k \epsilon _k \quad \text{for every $k\geq 1$}.
\]
Therefore, by the assumption that  
  $\{ \ell _k\} _{k\geq 1}$ is associated with $ \mathbf k$, 
\[
\left\vert \frac{\int _X \varphi ds_2 (\ell ) }{ \sum _{k=k(\ell -1)+1}^{k(\ell ) } m _k} -  \frac{\sum _{k=k(\ell -1)+1}^{k(\ell ) }  \#  I_k   }{ \sum _{k=k(\ell -1)+1}^{k(\ell ) } m _k} \int _X\varphi d\mu ^{(\ell)} \right\vert 
 \leq \max_{k(\ell -1)+1 \leq k\leq k(\ell)} \epsilon _k 
\]
for every $\ell \in [0,L]$, and 
 we have  
\begin{equation}\label{eq:0314a4}
\left\vert \frac{\int _X \varphi ds_2 (\ell ) }{ \sum _{k=k(\ell -1)+1}^{k(\ell ) } m _k} - \zeta \int _X \varphi d \mu ^{(\ell)} 
\right\vert 
  \leq \max_{k(\ell -1)+1 \leq k\leq k(\ell)} \epsilon _k 
  +   \left\vert \frac{ \sum _{k=k(\ell -1)+1}^{k(\ell ) } \# I_k}{ \sum _{k=k(\ell -1)+1}^{k(\ell ) } m_k } -\zeta \right\vert . 
\end{equation}
Furthermore, it is easy to check that 
\begin{equation}\label{eq:0314a5}
  \left\vert \frac{\int _X \varphi ds_3(\ell )}{ \sum _{k=k(\ell -1)+1}^{k(\ell ) } m _k} \right\vert \leq 
\left\vert 1-  \frac{  \sum _{k=k(\ell -1)+1}^{k(\ell ) } (\# \widehat I_k +\# I_k )}{ \sum _{k=k(\ell -1)+1}^{k(\ell ) } m_k} \right\vert . 
\end{equation}

By \eqref{eq:0314a1}, \eqref{eq:0314a2}, \eqref{eq:0314a3}, \eqref{eq:0314a4} and \eqref{eq:0314a5},  together with Lemma \ref{lem:reset}, we immediately get the conclusion.  
\end{proof}

\subsection{Filling of $A_L$}\label{s:def}

The following lemma is elementary but crucial. 
Notice that the choice of $\mathbf k$ 
  is \emph{independent of  both $f$ and $(\hat \mu, \{  \mu ^{(\ell )} \} _{\ell \geq 0})$}.

\begin{lem}\label{lem:12}
For any positive  integer $L$, nonnegative number $\tilde c$, positive number $\epsilon$ and  $\mathbf t \in A_L$, there is an increasing sequence of  positive integers $\mathbf k = \{ k(\ell )\} _{\ell =-1}^L$ with $k(0)>\tilde c$ such that 
\[
\left\vert \overline{ t} (\overline M (\mathbf k)) -\mathbf t\right\vert \leq \epsilon .
\]
\end{lem}

Lemma \ref{lem:12} easily follows from the following lemma. 
 For   $\mathbf M=(M_0, \ldots ,M_L)\in \mathbb N^{L+1}$, 
  we define $\overline{ T} (\mathbf M) \in [0,1]^L$ by
\begin{equation*}
\overline{ T}( \mathbf M) = \left( \frac{M_1}{S_1}, \frac{M_2}{S_2} , \ldots , \frac{M_L}{S_L} \right) .
\end{equation*}
(Recall \eqref{eq:0316b} for $S_\ell$.)

\begin{lem}\label{lem:11}
For any positive  integer $L$, nonnegative number $\tilde c$, positive number $\epsilon$ and  $\mathbf T \in [0,1]^L$, there is an increasing sequence of  positive integers $\mathbf k = \{ k(\ell )\} _{\ell =-1}^L$ with $k(0)>\tilde c$ such that 
\[
\left\vert \overline{ T} ( \overline M (\mathbf k) )-\mathbf T\right \vert \leq \epsilon .
\]
\end{lem}

\begin{proof}
We  use the notation $N_{k ', k }=\sum _{j=k'+1}^{k } m_j$ for $k'<k$, so that  
 we have
\[
\left( \overline T  ( \overline M(\mathbf k)) \right) _\ell = \frac{N_{k(\ell-1) , k(\ell)}}{N_{ k(-1), k(\ell )}} \quad \text{for $\ell \in [1,L]$}.
\]
Fix $L\geq 1$, $\tilde c\geq 0$, $\epsilon >0$ and $\mathbf T = (T_1, T_2, \ldots , T_L )\in [0,1]^L$. 
By the assumption \eqref{eq:moderate} for the moderate sequence $\{ m_k\} _{k\geq 1}$, we can take an integer $k(0)> \tilde c$ such that 
\begin{equation}\label{eq:0315e}
\frac{m_{k'}}{N_{k(-1), k'}} \leq \frac{ \epsilon }{\sqrt L} \quad \text{ for any $k'> k(0)$.}
\end{equation} 
Therefore, $ \frac{N_{k(0) , k(0)+1}}{N_{k(-1),k(0)+1} } =  \frac{m_{ k(0)+1}}{N_{k(-1),k(0)+1} } \leq \frac{\epsilon }{\sqrt L}$, and 
for each $k'>k(0)$, 
\begin{align*}
 \frac{N_{k(0) , k'}}{N_{k(-1),k'} } -  \frac{ N_{k(0), k'-1}}{N_{k(-1), k'-1}}
=\frac{N_{k(-1), k(0)} m_{k'}}{N_{k(-1), k'} N_{k(-1), k'-1}} 
\leq \frac{\epsilon }{\sqrt L} .
\end{align*}
Moreover, $\mathbb N \ni k^{''}\mapsto \frac{ N_{k(0) ,k(0)+k^{''}} }{ N_{k(-1), k(0)+k^{''}}}$ is monotonically increasing with value in $(0,1)$. 
So,  
 there is a positive integer $k(1)>k(0)$ such that 
$
\left\vert  \frac{N_{k(0),k(1)}}{N_{k(-1), k(1)}}  - T_1 \right\vert \leq \frac{\epsilon }{\sqrt  L} .
$

Assume that one can find  $k(\ell )>
 \ldots >k(0)$ satisfying  $\left\vert  \frac{N_{k(j-1),k(j)}}{N_{k(-1), k(j)}}  - T_j \right\vert \leq \frac{\epsilon }{\sqrt  L}$ for every $j\in [1,\ell ]$. 
Then, by virtue of \eqref{eq:0315e},
$ \frac{N_{k(\ell) , k(\ell )+1}}{N_{k(-1),k(\ell )+1} }   \leq \frac{\epsilon }{\sqrt L}$, and 
for each $k'>k(\ell)$, 
\begin{align*}
 \frac{N_{k(\ell ) , k'}}{N_{k(-1), k'} } -  \frac{ N_{k(\ell ), k'-1}}{N_{ k(-1), k'-1}}
=\frac{N_{k(-1), k(\ell )} m_{k'}}{N_{k(-1) , k'} N_{k(-1) , k' -1}} 
\leq \frac{\epsilon }{\sqrt L} ,
\end{align*}
implying that one can find $k(\ell +1)>k(\ell )$ such that  $\left\vert  \frac{N_{k(\ell ),k(\ell +1)}}{N_{k(-1), k(\ell +1)}}  - T_{\ell +1} \right\vert \leq \frac{\epsilon }{\sqrt  L}$. 
From this,  the conclusion immediately follows. 
\end{proof}

\subsection{Construction of an adapted code}\label{subsection:0516c2}

Let $\{ \tilde \epsilon _L\} _{L\geq 0}$ be a sequence of positive numbers such that $\lim _{L\to \infty} \tilde \epsilon _L =0$. 
For each $L\geq 1$, let 
$\{ \mathbf t_{L ,j} \} _{j=1}^{J(L)}$ be a finite subset of $A_L$ such that 
 $\{ B _{\tilde \epsilon _L /(L+1) } ( \mathbf t_{L,j }) \} _{j=1}^{J(L)}$ covers $A_L$. 
 (Recall that $B_\epsilon (\mathbf t)$ is the ball of radius $\epsilon$ and center $\mathbf t$.) 
If we write $\mu _{L,j}$ for $\mu _{\mathbf t}$ with $\mathbf t =\mathbf t_{L,j}$, then it follows from Lemma \ref{lem:reset2b} that 
 $\{ B _{\tilde \epsilon _L  } ( \mu_{L,j }) \} _{j=1}^{J(L)}$ covers $\Delta _L(\mathcal J )$.
We consider  a lexicographic  order in  
$\mathbb A= \{ (L ,j ) \} _{L\geq 1, 1\leq j\leq J(L)}$ by 
\[
(L' ,j' ) \leq (L ,j ) \quad \text{ if $L'<L$, or $L'=L$ and $j'\leq j$. }
\]

We define finite increasing sequences  of positive integers  $ \mathbf k_{L, j } =\{ k_{L, j}(\ell ) \} _{\ell =-1}^L$ inductively with respect to $(L, j )\in\mathbb A$. 
Let $\mathbf k_{1,1} =\{ k_{1,1}(\ell )\} _{\ell =-1}^1 $ 
be a finite increasing sequence of  positive integers such that  
\[
\left \vert \overline t (\overline M( \mathbf k_{1,1})) - \mathbf t _{1,1} \right\vert \leq \tilde \epsilon _1.
\] 
We can take such $\mathbf k_{1,1}$ by virtue of Lemma \ref{lem:12}. 
Let $(L,j)\in \mathbb A $, and assume that $\mathbf k_{L', j'} = \{  k_{L', j'} (\ell )\}_{\ell =-1}^{L'}$ is defined for any $(L ', j')\in \mathbb A$ satisfying $(L ', j')<(L,j)$. 
Then we take $\mathbf k_{L,j} =\{ k_{L,j}(\ell )\} _{\ell =-1}^L$ as a finite increasing sequence of  positive integers such that, if we write $(L',j')$ for the  predecessor of $(L,j)$ (i.e.~$L'=L$ and $j'=j-1$, or $L'=L-1$ and $j'=J(L')$, $j=1$), then
\begin{equation}\label{eq:tildeepsilon}
k_{L, j }(-1) = k_{L', j'} (L') \quad \text{and} \quad \frac{2 N_{k_{L, j }(-1 )} }{N_{k_{L, j }(L)} }< \tilde \epsilon _L , 
\end{equation}
and  that 
\begin{equation}\label{eq:tildeepsilon2}
\left\vert \overline t ( \overline M( \mathbf k_{L, j})) - \mathbf t _{L,j} \right \vert \leq \frac{\tilde \epsilon _L}{L+1}. 
\end{equation}
Again, we can take such $\mathbf k_{L, j}$ due to Lemma \ref{lem:12}. 
Finally, let $\{ \ell _k\}_{k\geq 1}$ be a sequence of nonnegative integers associated with $\mathbf k_{L, j}$ for all $(L, j)\in \mathbb A$, and $f$  a $\mathcal C^r$ diffeomorphism with a wandering domain $D$ coded by $\{ \ell _k\} _{k\geq 1}$ over $\{ m_k\} _{k\geq 1}$.

We now complete the proof of  Theorem \ref{thm:0411b}.
Fix  $\tilde L\geq 1$, $\mathbf t\in \Delta _{\tilde L}$ and $\epsilon >0$. 
Let $L\geq \tilde L$ be an integer such that 
$\tilde \epsilon _L<\epsilon /4$ and 
\begin{equation}\label{eq:1008d}
\epsilon _{k_1} + \left\vert \frac{\sum _{k=k_1} ^{k_2}\# I_k}{\sum _{k=k_1} ^{k_2} m_k} -\zeta \right\vert +  \left\vert 1- \frac{\sum _{k=k_1} ^{k_2}(\# I_k +\# \widehat I_k)}{\sum _{k=k_1} ^{k_2} m_k}  \right\vert < \frac{\epsilon}{8}
\end{equation}
for all $k_2>k_1 \geq k_{L,1}(-1)$ (see Remark \ref{rmk:tildeepsilon}). 
Let $1\leq j\leq J(L)$ be an integer such that 
$ d ( \mu_{\mathbf t} , \mu _{L , j} ) \leq \tilde \epsilon _L$ (one can find such $j$ by the construction of $\{ \mathbf t_{L,j} \} _{j=1}^{J(L)}$ and the fact $\Delta _{\tilde L}(\mathcal J ) \subset \Delta _{L}(\mathcal J )$ together with Lemma \ref{lem:reset2b}). 
Then, with the notation 
$\tilde{\mathbf k }
=\mathbf k_{L , j} $ and $\tilde N=N_{k_{L , j}(L)}$,
it follows from Lemma \ref{prop:13}, \eqref{eq:tildeepsilon} and \eqref{eq:1008d} that 
\[
 d \left( \delta _x^{\tilde N} , \mu _{\overline t( \overline M( \tilde{\mathbf k}))} \right) \leq \tilde \epsilon _{L} + \frac{\epsilon}{4} \quad \text{for all $x\in D$},
\]
and from  \eqref{eq:tildeepsilon2} and Lemma \ref{lem:reset2b} that 
\[
 d  \left(   \mu _{\overline t( \overline M( \tilde{\mathbf k}))}, \mu _{L ,j} \right) \leq \tilde \epsilon _{L} . 
\]
Therefore, we get $ d \left( \delta _x^{\tilde N} , \mu _{\mathbf t} \right) <\epsilon $ for all $x\in D$.
Since $\tilde L\geq 1$, $\mathbf t\in \Delta _{\tilde L}$ and $\epsilon >0$ are arbitrary, we conclude that 
$D\subset E (\mathcal J , f)$. This completes the proof of Theorem \ref{thm:0411b}.

\appendix

\section{Proof of \eqref{eq:0604b}}\label{s:is}
  In this appendix
 we give the proof of \eqref{eq:0604b}, by following \cite[Theorem 2.1]{BB}.
Let $X \subset \{ 1,2, \ldots ,m\} ^{\mathbb N}$ be a subshift with the specification property with $m\geq 2$, 
endowed with the metric $d_X$ given by 
 $d_{X}(x ,y ) = \sum _{j=0}^\infty \frac{\vert x_j -y_j \vert }{\beta ^j}$ for $x=(x_0 ,x_1, \ldots ), y=(y_0, y_1, \ldots ) \in X$ with $\beta >1$.
 Let $f: X\to X$ be the left shift operator.
For each $\epsilon >0$, 
we let
$T (\epsilon , X )$ be the maximal cardinality
of a set $\widetilde F$ consisting of periodic orbits of $f$
 such that 
$ d_X(x,y)\geq \epsilon$  for any $O, O' \in \widetilde F$ with $O\neq O'$ and   $ x\in O, \; y\in O'$. 
Then, it easily follows from \cite[Theorem 1.6]{BB} that 
\[
\liminf _{\epsilon \to 0} \frac{\log \log N(\epsilon , \mathcal P_{f} (X)  )}{-\log \epsilon } \geq \liminf _{\epsilon \to 0} \frac{\log T(\epsilon , X )}{-\log \epsilon } .
\]
Therefore, 
\eqref{eq:0604b} follows from
 the following claim:
\begin{equation}\label{lem:0604}
\liminf _{\epsilon \to 0} \frac{\log T(\epsilon , X )}{-\log \epsilon } \geq \mathrm{dim} (X ). 
\end{equation}
We will prove \eqref{lem:0604}.
Given $\epsilon >0$, let $N(\epsilon )$ be a positive integer  such that $\beta ^{-N(\epsilon )}\leq \epsilon < \beta ^{-N(\epsilon )+1}$.
 Then, for any periodic orbitss $O, O'$ of period $N(\epsilon )$ with $O  \neq O'$, 
 we have $\min \{ d_X (x,y ) \mid x\in O, \; y\in O'\}  \geq \beta ^{-N(\epsilon )+1}> \epsilon$. 
 On the other hand, since any subshift is expansive (\cite[Section 16]{DGS1976}), by applying \cite[Theorem 22.7]{DGS1976} for continuous maps with expansiveness and the specification property, we get that
 \[
 h_{\mathrm{top}}(f) = \lim _{n\to\infty} \frac{1}{n}\log \mathrm{Per} _n(f),
 \] 
 where $ h_{\mathrm{top}}(f)$ is the topological entropy of $f$ and $\mathrm{Per} _n(f)$ is the number of periodic points of period $n$.
 Therefore, for any $\delta >0$, by taking $\epsilon $ sufficiently small, we get 
\[
T(\epsilon , X) \geq  \frac{\mathrm{Per } _{N(\epsilon )}(f)}{N(\epsilon )} \geq    e^{h_{\mathrm{top}}(f) (1-\delta ) N(\epsilon )} \geq   e^{-h_{\mathrm{top}}(f) (1-\delta )\log \epsilon / \log \beta }.
\]
So, it follows from  Furstenberg's formula $\mathrm{dim} (X) =  h_{\mathrm{top}}(f) / \log \beta $ for subshifts (\cite{Furstenberg1967}) that
\[
\liminf _{\epsilon \to 0} \frac{\log T(\epsilon , X)}{-\log \epsilon } \geq \frac{(1-\delta )   h_{\mathrm{top}}(f)}{\log \beta} =(1-\delta ) \mathrm{dim} (X).
\]
Since $\delta >0$ is arbitrary, we get the claim.

\section*{Acknowledgments}
We would like to express our  gratitude to P.~Barrientos,    Y.~Cao, Y.~Chung, T.~Persson, 
A.~Raibekas, H.~Takahasi, K.~Yamamoto and D.~Yang 
  for many fruitful discussions
and valuable comments. 
 Furthermore, 
  we are   grateful to P.~Berger and S.~Biebler for their valuable comments on stretched exponential emergences,
  in particular, their  pointing out of a
mistake in an earlier version of the proof of Corollary \ref{cor:0611}.
  Moreover, we express  our great appreciation to  the anonymous reviewers for many  suggestions, 
all of which substantially improved the paper,
especially on the presentation of the paper  and the exposition of pointwise emergence for dynamics with the specification property. 
The first and second authors are  grateful to  the members of
Soochow University  for their kind hospitality
when they visited there.
The  second author is also grateful to  the members of
 Lund university for their warm hospitality. 
This work was partially supported by JSPS KAKENHI
Grant Numbers 21K03332, 19K14575 and 18K03376.

\end{document}